\def\theequation{\thesection.\@arabic \c@equation}
\def\theenumi{\@roman\c@enumi}
\def\@citecolor{blue}
\def\@linkcolor{blue}
\def\@urlcolor{blue}
\newtheorem{lemma}[equation]{Lemma}
\newtheorem{prop}[equation]{Proposition}
\newtheorem{conj}[equation]{Conjecture}
\newtheorem{claim*}{Claim}
\newtheorem{thm}[equation]{Theorem}
\newtheorem{sufficient condition}[equation]{Claim}
\theoremstyle{definition}
\newtheorem{rmk}[equation]{Remark}
\newenvironment{remark}[1][]{%
    \begin{rmk}[#1]}{\end{rmk}}
\newtheorem{eg}[equation]{Example}
\newenvironment{example}[1][]{%
    \begin{eg}[#1] }{\end{eg}}
\newtheorem{definition}[equation]{Definition}
\newtheorem{notn}[equation]{Notation}
\def\<{\langle}
\def\>{\rangle}
\newcommand{\coker}{\operatorname{coker}}
\newcommand{\Hom}{\operatorname{Hom}} 
\newcommand{\im}{\operatorname{im}}
\newcommand{\Proj}{\operatorname{Proj}}
\newcommand{\Spec}{\operatorname{Spec}}
\newcommand{\Tor}{\operatorname{Tor}}
\renewcommand{\to}{\longrightarrow}
\newcommand{\from}{\longleftarrow}
\newcommand{\kk}{\Bbbk}
\newcommand{\FF}{\mathbf{F}}
\newcommand{\GG}{\mathbf{G}}
\newcommand{\HH}{\mathrm{H}}
\newcommand{\KK}{\mathbf{K}}
\newcommand{\cO}{{\mathcal{O}}}
\newcommand{\PP}{\mathbb{P}}
\newcommand{\QQ}{\mathbb{Q}}
\newcommand{\VV}{\mathbb{V}}
\newcommand{\ZZ}{\mathbb{Z}}
\newcommand{\defi}[1]{{\bfseries\upshape #1}}
\newcommand{\cF}{\mathcal F}
\newcommand{\cG}{\mathcal G}
\newcommand{\cL}{\mathcal L}
\title{Three flavors of Extremal Betti tables}
\author[C. Berkesch]{Christine Berkesch}
\address{Department of Mathematics \\ Duke University, Box 90320 \\
  Durham, NC 27708} 
\email{cberkesc@math.duke.edu}
\author[D. Erman]{Daniel Erman}
\address{Department of Mathematics \\ University of Michigan \\
Ann Arbor, MI, 48109}
\email{derman@umich.edu}
\author[M. Kummini]{Manoj Kummini}
\address{Chennai Mathematical Institute \\ 
    Siruseri, Tamilnadu, 603103. India.}
\email{mkummini@cmi.ac.in}
\thanks{The first author was partially supported by NSF Grant DMS 0901123. 
The second author was supported by NSF Award No.~1003997 and by a Simons Foundation fellowship.}
\begin{document}
\begin{abstract}
We discuss extremal Betti tables of resolutions in three different
contexts.  We begin over the graded polynomial ring, where extremal Betti
tables correspond to pure resolutions.  We then contrast this behavior with
that of extremal Betti tables over regular local rings and over a bigraded
ring.
\end{abstract}

\maketitle
\section{Introduction}
\label{sec:intro}
Classification problems can be often discretized by replacing a collection
of complicated objects by numerical invariants. For instance, if we are
interested in modules over a local or graded ring, then we can study their
Hilbert Polynomial, Betti numbers, Bass numbers, and more.  Describing the behavior
of these invariants becomes a proxy for understanding the modules;
identifying the extremal behavior of an invariant provides structural limitations.

The conjectures of M.~Boij and J.~S\"oderberg~\cite{boij-sod1}, proven
by D.~Eisenbud and F.O.\ Schreyer \cite{eis-schrey1}, link the
extremal properties of invariants of free resolutions over the graded
polynomial ring $S=\kk[x_1, \dots, x_n]$ with the
Herzog--Huneke--Srinivasan Multiplicity Conjectures. Here
$\kk$ is any field, $S$ has the standard $\ZZ$-grading, and we study the
graded Betti tables of $S$-modules.  The Boij--S\"oderberg Conjectures state that
the extremal rays of the cone of Betti tables are given by Betti
tables of Cohen--Macaulay modules with pure resolutions.
There exist two excellent introductions to
Boij--S\"oderberg Theory~\cites{eis-schrey-icm,floy-expository}.

In this paper, we explore the notion of an extremal Betti
table in three different contexts: in the original
setting of a standard graded polynomial ring; over a regular local ring;
and over a finely graded polynomial ring.

Previous work has considered the extremal behavior of free resolutions,
in a manner unconnected to Boij--S\"oderberg theory.
Each graded Betti number of the Eliahou--Kervaire resolution of
a lex-segment ideal is known to be maximal among cyclic modules with the
same Hilbert function~\cites{bigatti, hulett, pardue}.
Also,~\cite{avramov-extremal} studies the Betti numbers of modules with
extremal homological dimensions, complexity, or curvature.  Though we will
not discuss these types of results further, the interested reader might
consider~\cites{avramov-infinite,peeva-book,iyengar-pardue}.

Throughout this paper, $S$ will denote a standard graded polynomial ring,
$R$ will denote a regular local ring, and $T$ will denote a finely graded
polynomial ring. For a graded $S$-module $M$, we define the \defi{graded
Betti numbers}
$\beta_{i,j}(M) := \dim_\kk \Tor^S_i(M,\kk)_j$. 
Betti numbers also have a more concrete interpretation: if  
$\FF=[\FF_0\gets\FF_1\gets \dots \gets \FF_n\gets 0]$ 
is a minimal graded free resolution of $M$, then $\beta_{i,j}(M)$ is the
number of minimal generators of $\FF_i$ of degree $j$. The \defi{graded
Betti table} of $M$, denoted $\beta(M)$, is the vector with coordinates
$\beta_{i,j}M$ in the vector space $\VV = \bigoplus_{i=0}^n \bigoplus_{j\in
\ZZ}\QQ$.

For local and multigraded rings, there are analogous definitions.  For a
regular local ring $R$ with residue field $\kk$, we  define the \defi{(local)
Betti numbers} of an $R$-module as $\beta^R_{i}(M)=\dim_{\kk}
\Tor^R_i(M,\kk)$.  Over a $\ZZ^m$-graded polynomial ring $T$, we define the
\defi{multigraded Betti numbers} of a $T$-module $M$ as
$\beta^T_{i,\alpha}(M)=\dim_{\kk} \Tor^T_i(M,\kk)_{\alpha}$, where
$\alpha\in \ZZ^m$. We denote the respective Betti tables by $\beta^R(M)$
and $\beta^T(M)$.

To streamline the exposition, we focus on modules of finite length. With
minor adjustments, most results we discuss can be extended to the case of
finitely generated modules.
See~\cites{boij-sod2,eis-schrey-icm,floy-expository} for the standard
graded case and \cite{beks-local} for the local case.
 
Let $M$ be a graded $S$-module (or an $R$-module or a multigraded
$T$-module) of finite length.  We say that $\beta(M)$ is \defi{extremal}
if, for any decomposition of the form
\[
\beta(M)=\beta(M')+\beta(M'')
\]
with $M', M''$ graded $S$-modules (or $R$-modules or multigraded
$T$-modules, respectively), we have that $\beta(M')$ is a scalar multiple
of $\beta(M)$.
Extremal Betti tables correspond to extremal rays of the cone of Betti
tables of finite length.  In the case of $S$, this is the cone
\[
B_\QQ^\text{fin}(S) := \QQ_{\geq 0}\cdot \{ \beta(M) \mid M \text{ is a
graded $S$-module of finite length}\} \subseteq \VV.
\]
Boij and S\"oderberg observed that for graded $S$-modules, there
is a natural sufficient condition for extremality.
\begin{sufficient condition}\label{cond:1}
For a graded $S$-module $M$ of finite length, if $M$ has a pure resolution,
then $\beta(M)$ is extremal.
\end{sufficient condition}
\noindent Here we say that $M$ has a \defi{pure resolution} if, for each
$i$, $\beta_{i,j}(M)\ne 0$ for at most one $j$. After proving the claim,
Boij and S\"oderberg conjectured that this condition is not only sufficient
but also necessary. In fact, after imposing some obvious degree
restrictions on the Betti table, they conjecture the existence of pure
resolutions of Cohen--Macaulay modules of essentially any combinatorial
type. This was later proven by \cite{efw} in characteristic $0$ and
by~\cite{eis-schrey1} in a characteristic-free manner; see
Theorem~\ref{thm:exist std}.

In \S\ref{sec:graded}, we first quickly review why Claim~\ref{cond:1}
provides a sufficient condition for extremality.  The remainder of the
section is an expository overview of Eisenbud and Schreyer's construction
of modules with pure resolutions.  

We then turn our attention to the case of a regular local ring, as
considered in~\cite{beks-local}.  In contrast with the graded case, there
is no obvious analogue of Claim~\ref{cond:1}.  In retrospect this is
inevitable, as there are no modules of finite length whose Betti tables are
extremal.

In the final section, we move in the opposite direction, refining the
grading to a finely graded polynomial ring $T$.  
One possibility for understanding extremal Betti tables in the multigraded setting is to seek out multigraded lifts of
pure resolutions from the standard $\ZZ$-graded setting.  This approach is taken in \cite{floystad-multigraded}, which considers the linear space of such multigraded Betti tables.  Moreover, in the case of $\kk[x,y]$ with $\ZZ^2$-grading, \cite{boij-floystad} constructs the entire cone of bigraded Betti tables spanned by such lifted pure resolutions.

Not all extremal Betti tables arise in this way in the multigraded setting, and we provide a
sufficient condition for a bigraded Betti table to be extremal, which demonstrates this fact.  The extra
rigidity induced by the bigrading seems to greatly complicate the picture.
We use this condition to show the existence of a zoo of
extremal Betti tables.

\section*{Acknowledgments}
We thank David Eisenbud and Frank-Olaf Schreyer
for conversations which shaped and strengthened many of these ideas, particularly
those  in~\S\ref{sec:multi}.
We came to understand the details of the proof of Eisenbud and Schreyer during a Mathematical Research
Communities workshop at Snowbird, Utah, organized by Mike Stillman and Hal
Schenck; we thank them and the other participants for their help. We thank
the National Science Foundation which supported this AMS MRC workshop
through grant DMS-0751449.  We thank Steven Sam for helpful ideas.  
\section{Preliminaries}
\label{sec:background}
Given a ring $R$ (or a scheme $X$) and a complex $\FF$ of $R$-modules (or
$\cO_X$-modules) with differential $\partial_i: \FF_i\to \FF_{i-1}$, we
denote the homology modules of $\FF$ by $\HH_i(\FF)=(\ker \partial_i)/(\im
\partial_{i+1})$. The derived category of $R$-modules (or of
$\cO_X$-modules) is the category consisting complexes of
$R$-modules (or $\cO_X$-modules) modulo the equivalence relation generated
by quasi-isomorphisms. We may represent any object in the derived category
by a genuine complex of modules.

For a projection of the form $\pi_1 \colon X\times \PP^m\to X$ of schemes,
there are well-defined higher direct image functors $R^i\pi_{1*}$ that take
a sheaf on $X\times \PP^m$ (or a complex of sheaves on $X\times \PP^m$) to
a sheaf on $X$ (or a complex of sheaves on $X$).  Further, if we are
willing to work with the derived category, then there is a single functor
$R\pi_{1*}$ that combines all of these higher direct image functors: the
functor $R\pi_{1*}$  takes a sheaf $\cF$ on $X\times \PP^m$ (or a complex
$\FF$ of sheaves) and returns an object in the derived category of
$\cO_X$-modules.  The functor $R\pi_{1*}$ combines the higher direct image
functors in the sense that, if $\GG$ is any complex that represents
$R\pi_{1*}\FF$, then $\HH_i(\GG)\cong R^{-i}\pi_{1*}\FF$ for all $i$.
In the special case where $X=\Spec(A)$, we will view each $R^i\pi_{1*}\cF$
as an $A$-module (instead of writing $\Gamma(X, R^i\pi_{1*}\cF)$), and
similarly for $R\pi_{1*}$.
If $\cF$ is an $\cO_{X\times \PP^m}$-module, then $R^i\pi_{1*}\cF=0$ for
all $i < 0$. Since computing $R\pi_{1*}\cF$ depends only on the
quasi-isomorphism class
of $\cF$, the same fact holds for any (locally free) resolution $\FF$ of an $\cO_{X\times
\PP^m}$-module. 

Let $\pi_2$ be the second projection $X\times \PP^m\to \PP^m$.  Given a
sheaf $\cG$ on $X$ and a sheaf $\cL$ on $\PP^m$, we set
\[
\cG \boxtimes \cL := \pi_1^*\cG\boxtimes \pi_2^*\cL.
\]
If $\cL=\cO_{\PP^m}(-e)$ is a line bundle on $\PP^m$, then by way of the projection formula~\cite[III, Ex.~8.3]{hartshorne}, computing $R\pi_{1*}(\cG\boxtimes \cL)$ is straightforward, and we will use this computation repeatedly.   There are three cases, depending on the value of $e$. 
\begin{enumerate}
    \item If $-e\geq 0$, then the only nonzero cohomology of
    $\cO_{\PP^m}(-e)$ is $H^0({\PP^m}, \cO_{\PP^m}(-e))$, and we have that
     $R\pi_{1*}((\cG\boxtimes \cO_{\PP^m}(-e))$ is the complex consisting of
    the sheaf $\cG\otimes H^0(\PP^m, \cO_{\PP^m}(-e))$ in homological degree $0$.
    \item If $-1\geq -e \geq -m$, then $\cO_{\PP^m}(-e)$ has no cohomology,
    so $R\pi_{1*}((\cG\boxtimes \cO_{\PP^m}(-e))=0$. 
    \item  If $-m-1\geq -e$, then the only nonzero cohomology of
    $\cO_{\PP^m}(-e)$ is $H^m({\PP^m}, \cO_{\PP^m}(-e))$, and we have that
    $R\pi_{1*}((\cG\boxtimes
    \cO_{\PP^m}(-e))$ is the complex consisting of sheaf $\cG\otimes
    H^m(\PP^m, \cO_{\PP^m}(-e))$ in homological degree $-m$.
\end{enumerate}

\section{Extremal Betti Tables in the Graded case}
\label{sec:graded}
In this section, we first prove Claim~\ref{cond:1}, providing a sufficient
condition for extremality in the graded case.  We then focus on the Eisenbud--Schreyer
construction of pure resolutions.

We assume throughout this section $\kk$ is an infinite field.
By~\cite{eis-erm}*{Lemma~9.6}, this assumption will not affect questions
related to cones of Betti tables.
A strictly increasing sequence of integers $d = (d_0<d_1<\cdots
<d_n)\in\ZZ^{n+1}$ is called a \defi{degree sequence} of $S$. We say a free
resolution $\FF$ is \defi{pure of type $d$} if it has the form 
\[
\FF:\quad 
    S(-d_0)^{\beta_0}\from S(-d_1)^{\beta_1}\from \cdots \from
    S(-d_n)^{\beta_n}\from 0.\qedhere
\] 

\begin{proof}[Proof of Claim~\ref{cond:1}]
Our argument follows~\cite{boij-sod1}*{\S2.1}, which extends a computation
of Herzog and K\"uhl~\cite{herzog-kuhl}; see
also~\cite{eis-schrey-icm}*{Proposition~2.1}. 

Let $M$ be a finite length module with a pure resolution
\[
0\gets M\gets S(-d_0)^{\beta_{0,d_0}}\gets S(-d_1)^{\beta_{1,d_1}}\gets
\dots \gets S(-d_n)^{\beta_{n,d_n}}\gets 0.
\]
Suppose that
$\beta(M)=\beta(M')+\beta(M'')$. Since $M$ has finite length, it follows
that $M'$ would also have to be a finite length module (the Hilbert series
is determined by the Betti table, and is additive).  Thus, by the
Auslander--Buchsbaum Theorem, the projective dimension of $M'$ is
 $n$. It then
follows from the decomposition of $\beta(M)$ that $M'$ admits a pure
resolution of type
$(d_0<d_1<\dots<d_n)$.  Thus, if the Betti table of a pure resolution is
unique up to scalar multiple, then $\beta(M')$ will be a scalar multiple of
$\beta(M)$.

To prove that the $\beta_{i,d_i}$ are determined (up to scalar multiple),
we consider the Herzog--K\"uhl equations for $M$ from \cite{herzog-kuhl}.
Since $M$ has finite length, the following $n$ equations must vanish:

\begin{align}\label{eq:HK}
\begin{cases}
\sum_{i=0}^n (-1)^i\beta_{i,d_i}&=0;\\
\sum_{i=0}^n (-1)^id_i\beta_{i,d_i}&=0;\\
\vdots &\vdots\\
\sum_{i=0}^n (-1)^id_i^{n-1}\beta_{i,d_i}&=0.
\end{cases}
\end{align}
Thinking of this as a system of $n$ linear equations in the
$(n+1)$-unknowns $\beta_{i,d_i}$, the solutions are given by the
kernel of the matrix
\[
\begin{pmatrix}
1&-1&\dots&(-1)^n\\
d_0&-d_1&\dots&(-1)^nd_n\\
\vdots & &\ddots &\vdots\\
d_0^{n-1}&-d_1^{n-1}&\dots&(-1)^nd_n^{n-1}
\end{pmatrix}.
\]
This is a rank $n$ matrix; in fact, the $n\times n$ minor given by the
first $n$ columns is nonzero. To see this, rescale every other column by
$-1$ to obtain an $n\times n$ Vandermonde matrix for $(d_0,\dots,d_{n-1})$.
Since the $d_i$ are strictly increasing, this Vandermonde determinant is
nonzero.  It thus follows that the kernel of this matrix has rank $1$, so
the $\beta_{i,d_i}$ are uniquely determined, up to scalar multiple.
\end{proof}

\begin{remark}\label{rmk:HK}
Using Cramer's rule and the formula for Vandermonde determinants,
any solution $(\beta_{0,d_0},\beta_{1,d_1},\dots, \beta_{n,d_n})$ 
to the system~\eqref{eq:HK} is a scalar multiple of
\[
\left( \frac{1}{\prod_{j\neq 0} |d_0-d_j|}, \frac{1}{\prod_{j\neq 1}
|d_1-d_j|}, \dots, \frac{1}{\prod_{j\neq n} |d_n-d_j|}\right).
\]
\end{remark}

We now show that any degree sequence of $S$ is realized by a pure
resolution.  The first two constructions of pure resolutions are due to
Eisenbud, Fl{\o}ystad, and Weyman~\cite{efw}.  Their constructions are based
on representation theory and Schur functors, and they thus require that
$\Bbbk$ has characteristic $0$.  See \cite{floy-expository}*{\S3} for an
expository treatment of those constructions.  The first characteristic-free
construction is due to Eisenbud and Schreyer~\cite{eis-schrey1}.  Their
construction, which relies on a spectral sequence or, equivalently, on the
Kempf-Lascoux-Weyman Geometric Technique, was later generalized in
\cite{beks-tensor}.

\begin{thm}[\cite{eis-schrey1}*{Theorem~5.1}]
\label{thm:exist std}
For any degree sequence $d=(d_0 < d_1 < \cdots < d_n)$, there exists a
finite length graded $S$-module whose minimal free resolution is pure of
type $d$. 
\end{thm}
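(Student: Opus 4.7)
\medskip

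\noindent\textbf{Plan.} My plan is to follow the characteristic-free strategy of Eisenbud and Schreyer: realize the desired module as $R^0\pi_{1*}$ of a carefully chosen complex of locally free sheaves on $X \times \PP^m$, where $X = \Spec S$ and $m$ is chosen based on the gaps in $d$. The trichotomy recalled in Section~\ref{sec:background} is the key computational tool.

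First, set $m$ roughly equal to $d_n - d_0 - n$, the ``excess jump'' of the degree sequence beyond that of a linear resolution. I would then look for a bounded complex $\cG^\bullet$ of locally free sheaves on $X \times \PP^m$, each of whose terms is a direct sum of line bundles $\cO_X \boxtimes \cO_{\PP^m}(-b_j)$ with twists $-b_j$ avoiding the vanishing range $[-m,-1]$ of the trichotomy. The natural candidate is a Koszul or Eagon--Northcott complex associated to a regular section of a direct sum of line bundles of the form $\cO_X(-1) \boxtimes \cO_{\PP^m}(c_i)$, possibly after a global twist by $\cO_X \boxtimes \cO_{\PP^m}(e)$ designed to move the resulting twists into the two nonvanishing regimes. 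The section should be chosen so that its zero locus on $X$ is supported at the origin, which will yield finite length at the end.

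Second, compute $R\pi_{1*}\cG^\bullet$ by applying the trichotomy term by term. A twist with $-b_j \geq 0$ contributes $S \otimes_\kk H^0(\PP^m,\cO_{\PP^m}(-b_j))$ in the same homological slot as the original term of $\cG^\bullet$; a twist with $-b_j \leq -m-1$ contributes $S \otimes_\kk H^m(\PP^m,\cO_{\PP^m}(-b_j))$ after a homological shift by $m$. If the twists and the length of $\cG^\bullet$ are calibrated correctly, the surviving summands fall exactly in homological degrees $0, 1, \ldots, n$, with the $i$-th summand concentrated in internal degree $d_i$. The Herzog--K\"uhl computation of Remark~\ref{rmk:HK} then pins down the multiplicities $\beta_i$ uniquely up to a common scalar, so the ranks of the surviving summands are automatically consistent.

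The main obstacle is to verify that $R\pi_{1*}\cG^\bullet$ is concentrated in homological degree $0$, i.e.\ that the surviving cohomology pieces really assemble into an honest free resolution of an $S$-module rather than into a complex with higher cohomology. Matching ranks alone is not enough: the differentials of $\cG^\bullet$ must descend to nonzero maps between the surviving cohomology summands, and the hypercohomology spectral sequence computing $R\pi_{1*}\cG^\bullet$ must degenerate on the expected page. This is the technical heart of the Eisenbud--Schreyer construction, and it is where the rigid Eagon--Northcott-type structure of $\cG^\bullet$ (as opposed to an arbitrary complex with matching twists) is essential. Once this acyclicity is established, the resulting $S$-module is the pushforward of a sheaf supported on $\{0\} \times \PP^m$, hence is of finite length, and by construction it admits a minimal free resolution pure of type $d$.
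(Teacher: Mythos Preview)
Your plan captures the broad strategy---push forward a twisted Koszul-type complex and use the cohomology trichotomy for line bundles on projective space---but it is missing the one structural idea that makes the Eisenbud--Schreyer construction work for an \emph{arbitrary} degree sequence.

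The problem is your use of a single factor $\PP^m$ with $m \approx d_n - d_0 - n$. Applying Proposition~\ref{prop:pushforward} to a Koszul complex of bilinear forms on $X\times\PP^m$ collapses exactly one contiguous block of $m$ terms, and so introduces exactly one gap into the degree sequence: this is precisely Lemma~\ref{lem:0q}, which produces only sequences of the shape $(0,q{+}1,q{+}2,\ldots,q{+}n)$. For a sequence with several gaps, say $(0,3,5,6)$, no single $\PP^m$ and no choice of twists $c_i$ will suffice. In a Koszul complex of forms of $X$-degree~$1$, every summand of the term in homological position $j$ has $S$-degree $j$, and the pushforward places that term either in homological slot $j$ (via $H^0$) or in slot $j-m$ (via $H^m$). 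Thus the resulting $S$-degree in homological slot $i$ is forced to be either $i$ or $i+m$, and the only degree sequences obtainable have a single gap of size $m$.

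The paper's remedy is to replace $\PP^m$ by a \emph{product} $\PP^{m_1}\times\cdots\times\PP^{m_n}$, where $m_i=d_i-d_{i-1}-1$ records the $i$th gap, and to start from a Koszul complex of $d_n$ multilinear forms of multidegree $(1,1,\ldots,1)$ on $\Spec(S')\times\PP^{m_1}\times\cdots\times\PP^{m_n}$ (with $S'=\kk[y_1,\ldots,y_{d_n}]$), twisted on each factor $\PP^{m_i}$ so that pushing forward away from $\PP^{m_i}$ collapses exactly the terms between degrees $d_{i-1}$ and $d_i$. Iterating Proposition~\ref{prop:pushforward} once per factor yields a pure resolution over $S'$ of a Cohen--Macaulay module of codimension $n$; one then cuts down to $S$ by generic linear forms. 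The acyclicity you correctly flag as the main obstacle is handled at each iteration by the spectral-sequence argument already carried out in Lemma~\ref{lem:0q} and Proposition~\ref{prop:pushforward}, so it does not require a separate treatment.

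Two smaller points: your description of the terms as $\cO_X\boxtimes\cO_{\PP^m}(-b_j)$ omits the $S$-twist, without which there is no mechanism for the $i$th term to land in internal degree $d_i$; and the phrase ``twists $-b_j$ avoiding the vanishing range'' is backwards---the terms one wants to kill are precisely those whose $\PP^m$-twist \emph{lies in} the vanishing range.
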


Of course, it suffices to prove the theorem in the case where $d_0=0$, as
we can obtain a pure resolution of type $(d_0<\cdots<d_n)$ by tensoring a
pure resolution of type $(0<d_1-d_0<\dots<d_n-d_0)$ with $S(-d_0)$.  When
Boij and S\"oderberg conjectured the existence of pure resolutions, there
were very few known examples. One family of examples that was known came
from the Eagon--Northcott complex, the Buchsbaum--Rim complex, and other
related complexes~\cite{buchs-eis}.  Lascoux had shown that these complexes could be
constructed by applying a pushforward construction to a Koszul
complex~\cite{lascoux}. This pushforward construction has the effect of
collapsing strands of the Koszul complex, and Eisenbud and Schreyer
realized that (with the appropriate setup) this collapsing effect could be
iterated. This became the key to their construction of pure
resolutions.\footnote{The idea that Eisenbud and Schreyer's construction of
pure resolutions is a higher-dimensional analogue of the Eagon--Northcott
and Buchsbaum--Rim complexes is developed explicitly
in~\cite{beks-tensor}*{\S10}.}

Before presenting Eisenbud and Schreyer's general construction for a pure
resolution, we review the original collapsing technique in the following lemma.
This produces a pure resolution of type $(0,q+1,\dots, q+n)$,
which is the Eagon--Northcott complex
for an $n\times (q+1)$ matrix of linear forms over $\kk[x_1, \dots, x_{n+q}]$.
The proof of this lemma contains all of the technical features required for the
general case.  An example is provided in Figure~\ref{fig:0345twists}.

\begin{figure}
\begin{tabular}{|c||c|c|}%
\multicolumn{3}{c}{The complex $\KK$}\\%
\hline
&$\Spec(S)$&$\times\, \PP^3$ \qquad  \; \\ \hline
$\KK_0$&$S^1$\phantom{$(-0)$}&$\boxtimes\, \cO_{\PP^3}$\phantom{$(-0)$} \\%
$\KK_1$&$S(-1)^6$ &$\boxtimes\,  \cO_{\PP^3}(-1)$ \\%
$\KK_2$&$S(-2)^{15}$ &$\boxtimes\,  \cO_{\PP^3}(-2)$ \\%
$\KK_3$&$S(-3)^{20}$ &$\boxtimes\,  \cO_{\PP^3}(-3)$ \\%
$\KK_4$&$S(-4)^{15}$ &$\boxtimes\,  \cO_{\PP^3}(-4)$ \\%
$\KK_5$&$S(-5)^6$ &$\boxtimes\,  \cO_{\PP^3}(-5) $\\%
$\KK_6$&$S(-6)^1$ &$\boxtimes\,  \cO_{\PP^3}(-6)$ \\%
\hline%
\end{tabular}
$\xrightarrow{\quad R\pi_{2*} \quad }$
\begin{tabular}{|c||c|}%
\multicolumn{2}{c}{The complex $\FF$}\\%
\hline
&$\Spec(S)$ \\ \hline
$\FF_0$&$S^1\otimes H^0(\PP^3,\cO_{\PP^3})$ \\%
-&- \\%
-&- \\%
-&- \\%
$\FF_1$&$S(-4)^{15}\otimes H^3(\PP^3, \cO_{\PP^3}(-4))$ \\%
$\FF_2$&$S(-5)^{6\phantom{1}}\otimes H^3(\PP^3, \cO_{\PP^3}(-5)) $\\%
$\FF_3$&$S(-6)^{1\phantom{1}}\otimes H^3(\PP^3, \cO_{\PP^3}(-6))$ \\%
\hline%
\end{tabular}
\caption{To construct a pure resolution $\FF$ of type $(0,4,5,6)$ on
$\Spec(S')$, we begin with a Koszul complex $\KK$ on
$\Spec(S')\times \PP^3$ and then use a pushforward construction to collapse
three of the terms.  A term $\KK_i$ gets collapsed if the second factor is
a line bundle on $\PP^3$ with no cohomology.}
\label{fig:0345twists}
\end{figure}

\begin{lemma}\label{lem:0q}
Let $q$ be a positive integer and let $S':=\kk[x_1, \dots, x_{n+q}]$. Let
$f_1, \dots, f_{n+q}$ be generic bilinear forms on $\Spec(S')\times \PP^q$ 
and let $\KK$ be the Koszul complex of locally free sheaves on $\Spec(S')\times \PP^q$ given by the $f_i$. 
Then $R\pi_{1*}(\KK)$ is represented by a pure
resolution $\FF$ of type $(0,q+1,q+2,\dots,q+n)$ that resolves a
Cohen--Macaulay $S'$-module of codimension $n$.
\end{lemma}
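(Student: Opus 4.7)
The plan is to compute $R\pi_{1*}\KK$ by pushing each term of the Koszul complex forward individually via the three-case analysis of Section~\ref{sec:background}, and then to observe that the resulting pieces assemble into a pure complex without overlap. Since each $f_i$ is a section of $\cO_{\Spec S'}(1) \boxtimes \cO_{\PP^q}(1)$, the Koszul complex has $i$-th term
$$\KK_i = \cO_{\Spec S'}(-i)^{\binom{n+q}{i}} \boxtimes \cO_{\PP^q}(-i), \qquad 0 \le i \le n+q,$$
sitting in Koszul homological degree $i$. Applying the three cases with $m = q$ yields: (i) $R\pi_{1*}\KK_0 = S'$, purely in cohomological degree $0$; (ii) $R\pi_{1*}\KK_i = 0$ for $1 \le i \le q$, so these terms drop out; and (iii) for $i = q+k$ with $1 \le k \le n$, $R\pi_{1*}\KK_i$ is a free $S'$-module generated in degree $q+k$, concentrated in cohomological degree $q$, and hence contributing to the total complex in degree $i - q = k$.

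Next I would assemble these pieces. Viewing $R\pi_{1*}\KK$ as the totalization of the double complex whose row-wise cohomology is computed by (i)--(iii), the surviving entries sit in distinct total-degrees $0, 1, 2, \ldots, n$, so the spectral sequence collapses for formal reasons. Hence $R\pi_{1*}\KK$ is represented by a free complex $\FF$ whose $k$-th term is generated in a single degree $d_k$ with $d_0 = 0$ and $d_k = q+k$ for $k \ge 1$; this is precisely a pure complex of type $(0, q+1, \ldots, q+n)$. The twists $d_k$ are strictly increasing, which automatically makes $\FF$ minimal.

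To identify what $\FF$ resolves, I would invoke genericity. For generic bilinear $f_1, \ldots, f_{n+q}$, the subscheme $Z \subseteq \Spec S' \times \PP^q$ they define has the expected codimension $n+q$; in particular the $f_i$ form a regular sequence on $\Spec S' \times \PP^q$, so $\KK$ is a locally free resolution of $\cO_Z$ and $R\pi_{1*}\KK \simeq R\pi_{1*}\cO_Z$. A dimension count gives $\dim Z = q$, and for generic choices $\pi_1|_Z$ is finite onto its image, an $n$-codimensional subvariety of $\Spec S'$. Consequently $R^i\pi_{1*}\cO_Z = 0$ for $i > 0$, and $\FF$ is a (minimal) free resolution of the finitely generated $S'$-module $M := \pi_{1*}\cO_Z$, whose support has codimension exactly $n$. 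Since $\pdim M = n = \codim M$, the Auslander--Buchsbaum formula forces $\depth M = \dim M$, so $M$ is Cohen--Macaulay of codimension $n$.

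The main obstacle is the genericity argument: one needs to know that on a dense open subset of the parameter space of the $f_i$'s, (a) the $f_i$'s form a regular sequence on $\Spec S' \times \PP^q$, and (b) the restriction $\pi_1|_Z$ is finite onto an image of codimension exactly $n$. Both are standard incidence-variety computations—the ambient space has dimension $(n+q) + q$ so $n+q$ bilinear sections can cut out a scheme of the expected codimension, and failure of either condition is a proper closed condition on the $f_i$. By contrast, the spectral sequence collapse in the second step is automatic, since each total-degree receives a contribution from at most one nonzero pushforward.
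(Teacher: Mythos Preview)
Your overall strategy matches the paper's, but there are two genuine gaps.

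The main one is the claim that the spectral sequence ``collapses for formal reasons'' because the surviving $E_1$-entries lie in distinct total degrees. This is false: every differential $d_r$ shifts total degree by exactly $1$, so entries in \emph{adjacent} total degrees are precisely the ones that can be connected. Concretely, the $d_1$-differentials among the row-$q$ entries $E_1^{-q-k,q}$ (for $1\le k\le n$) are typically nonzero, and there is a further nonzero differential $d_{q+1}$ from position $(-q-1,q)$ to $(0,0)$. Having at most one entry per antidiagonal only removes extension ambiguities when reading off $R^\ell\pi_{1*}\KK$ from $E_\infty$; it does not hand you a complex of free modules representing $R\pi_{1*}\KK$ in the derived category. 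The paper supplies exactly this missing work: it first proves $R^\ell\pi_{1*}\KK=0$ for all $\ell\ne 0$ (combining $\ell\ge 0$ from $\KK\simeq\cO_Z$ with $\ell\le 0$ from the shape of the $E_1$-page), deduces that the row-$q$ strand is exact except at its left end and that $d_{q+1}^{-q-1,q}$ is injective, and then \emph{lifts} $d_{q+1}$ to a map $\phi\colon\FF_1\to\FF_0$ between the free $E_1$-terms. Only after this splice does one have an honest complex $\FF$, and its acyclicity is deduced from the same vanishing.

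Secondly, your route to exactness via finiteness of $\pi_1|_Z$ is incorrect: since each $f_i$ is bilinear, it vanishes identically on $\{0\}\times\PP^q$, so the fiber of $Z$ over the origin of $\Spec(S')$ is all of $\PP^q$, regardless of genericity. Thus $\pi_1|_Z$ is never finite. This gap is not fatal once $\FF$ is built correctly---acyclicity then follows simply from $H_k(\FF)\cong R^{-k}\pi_{1*}\cO_Z=0$ for $k\ge 1$, which needs only that $\cO_Z$ is a sheaf---but as written, neither your construction of $\FF$ nor your proof of its exactness goes through.
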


\begin{proof}[Proof of Lemma~\ref{lem:0q}]
Since $\kk$ is infinite, we may assume that the $f_i$ form a regular
sequence, and hence they define a $q$-dimensional
subscheme $Z \subseteq
\mathbb A^{n+q}\times \PP^q$.  The Koszul complex
$\KK$ is thus a resolution of $\cO_Z$.  The support of ${\pi_1}_*\cO_Z$ has
dimension at most $q$, and therefore has codimension at least
$n$.  In fact, we will later see that the $S'$-module ${\pi_1}_*\cO_Z$ is a
Cohen--Macaulay of codimension $n$.

For $0\leq i\leq n+q$, the $\PP^q$-degree of the generators of $\KK_i$ is $i$.
By taking the direct images under the map $\pi_1: \Spec(S')\times \PP^q\to
\Spec(S')$,
we will collapse the terms $\KK_1, \KK_2, \dots, \KK_q$, resulting in the
desired pure resolution.

Our first goal is to show that $R^\ell{\pi_1}_{*}\KK\ne 0$ if and only if
$\ell=0$.
We do this in two steps.  As noted in
Section~\ref{sec:background}, since $\KK$ is a resolution of $\cO_Z$, it
follows that $R^\ell{\pi_1}_{*}\KK\ne 0$ only if $\ell\geq 0$. 

By computing $R{\pi_1}_{*}(\KK)$ in a second way, we will now show that
$R^\ell{\pi_1}_{*}\KK \ne 0$ only if $\ell\leq 0$.  Note that
$\KK_i={S'}^{\binom{n+q-1}{i}}(-i)\boxtimes \cO_{\PP^{q}}(-i)$.  For each
$i$, let $C^{-i,\bullet}$ be the \v{C}ech resolution of $\KK_i$ with
respect to the standard \v{C}ech cover $\{\Spec({S'})\times U_0, \dots
\Spec({S'})\times U_{q}\}$ of $\Spec({S'})\times \PP^{q}$.  Since the
construction of \v{C}ech resolutions is functorial, we obtain a double
complex $C^{\bullet,\bullet}$ consisting of $\pi_{1*}$-acyclic sheaves on
$\Spec({S'})\times \PP^q$, which has the form:
\[
\xymatrix@C=1.25em@R=1.25em{
C^{\bullet,\bullet}:
& 
\vdots 
&& 
\vdots
&
\\
0 
& 
{S'}\boxtimes\left(\bigoplus_{k,k'=0}^{q} \cO|_{U_k\cap U_{k'}}\right)
\ar[l] \ar[u] 
&&
{S'}(-1)^{n+q}\boxtimes \left(\bigoplus_{k,k'=0}^{q} \cO(-1)|_{U_k\cap
U_{k'}}\right) \ar[ll]
\ar[u]
&
\dots \ar[l] \\
0 
& 
{S'}\boxtimes \left(\bigoplus_{k=0}^{q} \cO|_{U_k}\right) \ar[l] \ar[u] 
&&
{S'}(-1)^{n+q}\boxtimes \left(\bigoplus_{k=0}^{q} \cO(-1)|_{U_k}\right)
\ar[ll]
\ar[u]
&
\dots \ar[l] \\ 
& 
0\ar[u]
&&
0\ar[u]
&
\\
}
\]
We may now compute $R\pi_{1*}\KK$ by applying
$\pi_{1*}$ to this double complex $C^{\bullet,\bullet}$ and running the
vertical 
spectral sequence for the resulting double complex of ${S'}$-modules.
After taking vertical homology of $C^{\bullet,\bullet}$, we obtain the
${}_vE^{\bullet,\bullet}_1$-page with differential
$\partial_1^{\bullet,\bullet}$.
\[
{}_vE^{\bullet,\bullet}_1:
\xymatrix@C=1.25em@R=.8em{
& 
\vdots 
&& 
\vdots
&
\\
0 
& 
{S'}\otimes H^1(\PP^{q},\cO) \ar[l]
&&
{S'}(-1)^{n+q}\otimes H^1(\PP^{q},\cO(-1)) \ar[ll]_-{\partial_1^{-1,1}}
&
\dots \ar[l] \\
0 
& 
{S'}\otimes H^0(\PP^{q},\cO) \ar[l]  
&&
{S'}(-1)^{n+q}\otimes H^0(\PP^{q},\cO(-1)) \ar[ll]_-{\partial_1^{-1,0}}
&
\dots \ar[l] \\ 
& 
0
&&
0
&
\\
}
\]
The general entry on the ${}_vE_1$-page is given by 
\[
{}_vE_1^{-i,j} = {S'}(-i)^{\binom{n+q}{i}}\otimes H^j(\PP^q,\cO(-i)).
\]
Since $H^j(\PP^q,\cO(-i))=0$ unless $j=0$ or $q$, most of these entries of
${}_vE_1$ are equal to $0$.  In fact, ${}_vE_1$ has a single nonzero entry
on row $0$, with the only remaining nonzero entries appearing on row $q$,
as shown below. 
\[
{}_vE_1^{\bullet,\bullet}:\xymatrix@C=1.25em@R=.8em{
0&0\ar[l]&0\ar[l]&\dots\ar[l]&{S'}(-q-1)^{\binom{n+q}{q+1}}\otimes
H^q(\cO(-q-1))\ar[l]^{}&\dots\ar[l]_-{\ \ \ \partial_1^{-q-2,q}}\\
0&0\ar[l]&0\ar[l]&\dots\ar[l]&0\ar[l]&\dots\ar[l]\\
&\vdots &\vdots & &\vdots &\\
0&0\ar[l]&0\ar[l]&\dots\ar[l]&0\ar[l]&\dots\ar[l]\\
0&{S'}\otimes H^0(\cO)\ar[l]&0\ar[l]&\dots\ar[l]&0\ar[l]&\dots\ar[l]
}
\]
Since all of the terms of the ${}_vE_1$ page lie in total degree $-i+j\leq
0$, we see that $R^\ell\pi_{1*}\KK\ne 0$ only if $\ell\leq 0$, as claimed.  

Note that after passing the ${}_vE_1$-page, the only other differential
exiting or entering a nonzero term will occur on ${}_vE_{q+1}$, from
position $(-i,j)=(-q-1,q)$ to $(-i,j)=(0,0)$.  Since this spectral sequence
satisfies ${}_vE_1^{-i,j} \Rightarrow R^{-i+j}\pi_{1*}\KK$ and our previous
computation shows that $R^\ell\pi_{1*}\KK\ne 0$ if only if $\ell= 0$, only
positions $(0,0)$ and $(-q-1,q)$ may contain nonzero entries on the
${}_vE_2$-page. In addition, since $R^{-1}\pi_{1*}\KK=0$, we see that
${}_vE^{-q-1,q}_\infty=0$. Hence the differential $\partial_{q+1}^{-q-1,q}$
must be injective. 
\[
{}_vE_{q+1}^{\bullet,\bullet}:
\xymatrix@C=1.25em@R=.8em{
0&0&0&\dots&\coker
\partial_1^{-q-2,q}\ar@{^{(}->}[llldd]^-{\partial_{q+1}^{-q-1,q}}&0\\
&\vdots &\vdots & &\vdots &\\
0&{S'}\otimes H^0(\cO)&0&\dots&0&\dots
}
\]
The differential $\partial_{q+1}^{-q-1,q}$ lifts to a map $\phi$ of the
free modules on the ${}_vE_1$ page:
\[
\xymatrix{
S'\otimes H^0(\cO) &&&{S'}(-q-1)^{\binom{n+q}{q+1}}\otimes
H^q(\cO(-q-1))\ar@{-->}[lll]_-\phi\ar@{->>}[d]\\
S'\otimes H^0(\cO)\ar[u]_-{=}&&&\coker
\partial_1^{-q-2,q}\ar[lll]_-{\partial_{q+1}^{-q-1,q}}.
}
\]
We thus conclude that $R^0\pi_{1*}\KK = \pi_{1*}\cO_Z$ is represented by a
minimal complex of the form 
\[
\xymatrix@C=1.75em{
{S'} & \ar[l]_-{\phi} {S'}(-q-1)^{\binom{n+q}{q+1}}\otimes H^q(\cO(-q-1)) &
\ar[l] {S'}(-q-2)^{\binom{n+q}{q+2}}\otimes H^q(\cO(-q-2)) & \ar[l] \cdots
}. 
\]
Notice this is a pure complex of type $(0,q+1,\dots,q+n)$.  Since it
is acyclic, it is actually a
resolution of the ${S'}$-module $\pi_{1*}\cO_Z$. 
Hence this module has projective dimension 
$n$, and since we noted initially that it has codimension
at least $n$, it follows that $\pi_{1*}\cO_Z$ is a Cohen--Macaulay module
of codimension $n$. 
\end{proof}

The following proposition, due to Eisenbud and Schreyer, provides a more
general framework than Lemma~\ref{lem:0q} for collapsing terms from a
resolution.
The proof is nearly identical. 
See
Figure~\ref{fig:ESprop} for an illustration of this result. 

\begin{prop}[\cite{eis-schrey1}*{Proposition~5.3}]
\label{prop:pushforward}
Let $\cF$ be a sheaf on $X\times\PP^m$ that has a resolution $\mathbf{G}$
arising from $\cO_X$-modules $\cG_i$, such that  
\[
\mathbf{G}_i=\cG_i\boxtimes\cO(-e_i) \text{ for } 0\leq i\leq N 
\]
and $e_0<\cdots<e_N$. If this sequence contains the subsequence
$(e_{k+1},\dots,e_{k+m})=(1,2,\dots,m)$ for some $k\geq-1$, then 
\[
R^\ell \pi_{1*}\cF \cong R^\ell \pi_{1*}\mathbf{G}= 0 \quad\text{for
}\ell\ne 0,
\]
and $\pi_{1*}\cF$ has a resolution $\mathbf{G}'$, where
\[
\mathbf{G}'_i=\begin{cases}
\cG_i\otimes H^0(\PP^m, \cO(-e_i))& \text{ for } 0\leq i\leq k,\\
\cG_{i+m}\otimes H^m(\PP^m, \cO(-e_{i+m}))& \text{ for } k+1 \leq i\leq
N-m.  
\end{cases}
\]
\end{prop}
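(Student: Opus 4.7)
The plan is to mirror the proof of Lemma~\ref{lem:0q}, exploiting that the subsequence condition $(e_{k+1},\dots,e_{k+m}) = (1,2,\dots,m)$ picks out exactly the terms of $\mathbf{G}$ whose line-bundle factor on $\PP^m$ has no cohomology, and so gets collapsed by $R\pi_{1*}$. Because $\mathbf{G}$ is a locally free resolution of $\cF$, we have $R\pi_{1*}\cF \cong R\pi_{1*}\mathbf{G}$ in the derived category, so it suffices to compute the latter. First I would replace each $\mathbf{G}_i = \cG_i \boxtimes \cO(-e_i)$ by its standard \Cech resolution with respect to the pullback of the standard affine cover of $\PP^m$, producing a double complex $C^{\bullet,\bullet}$ of $\pi_{1*}$-acyclic sheaves whose total complex represents $R\pi_{1*}\mathbf{G}$.

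Applying $\pi_{1*}$ and running the vertical spectral sequence yields
\[
{}_vE_1^{-i,j} = \cG_i \otimes H^j(\PP^m, \cO(-e_i)).
\]
The strict monotonicity of the $e_i$ combined with the subsequence hypothesis forces $e_i \leq 0$ for $0 \leq i \leq k$ and $e_i \geq m+1$ for $k+m+1 \leq i \leq N$. By the three-case cohomology analysis from Section~\ref{sec:background}, the only nonzero $E_1$-entries then lie on row $j = 0$ at columns $-k, \dots, 0$ and on row $j = m$ at columns $-N, \dots, -(k+m+1)$, with values as in the statement. Each such entry has total degree $\leq 0$; combined with the fact that $\cF$ is a sheaf (so $R^\ell\pi_{1*}\cF = 0$ for $\ell < 0$), this already gives $R^\ell\pi_{1*}\cF = 0$ for all $\ell \ne 0$.

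Next I would track the higher differentials. On each row the $d_1$ differential is induced from the differentials of $\mathbf{G}$ by applying $H^0$ or $H^m$ termwise, producing the two strands making up the candidate resolution $\mathbf{G}'$. For $2 \leq r \leq m$ and for $r \geq m+2$, any $d_r$ originating on row $0$ or row $m$ lands on an empty row. A direct degree check shows that the target column $-i+m+1$ of $d_{m+1}$ from a row-$m$ entry $(-i,m)$ lies in the row-$0$ range $[-k,0]$ only for $i = k+m+1$, so the only possibly nonzero inter-row differential is $d_{m+1}\colon {}_vE_{m+1}^{-(k+m+1),m} \to {}_vE_{m+1}^{-k,0}$. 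Lifting this to a map $\phi$ between the corresponding $\cO_X$-modules on the ${}_vE_1$-page, exactly as in the proof of Lemma~\ref{lem:0q}, and splicing the two strands via $\phi$ produces the complex $\mathbf{G}'$ described in the statement. The vanishing of $R^\ell\pi_{1*}\cF$ for $\ell \ne 0$ then forces $\mathbf{G}'$ to have homology concentrated in degree $0$ equal to $\pi_{1*}\cF$, i.e., to be a resolution of $\pi_{1*}\cF$.

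The main subtlety is the construction and verification of $\phi$: one must check that the lifted map makes $\mathbf{G}'$ into a genuine complex and that the splicing really does produce a representative of $R\pi_{1*}\cF$ in the derived category. Every step after the \Cech setup is a direct generalization of the corresponding step in Lemma~\ref{lem:0q}, with the subsequence hypothesis playing the exact role that the middle terms of the Koszul complex did there.
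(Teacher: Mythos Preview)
Your proposal is correct and follows essentially the same route as the paper's proof: both set up the \v{C}ech double complex, compute the ${}_vE_1$-page as $\cG_i \otimes H^j(\PP^m,\cO(-e_i))$, use the two-row shape to get $R^\ell\pi_{1*}\mathbf{G}=0$ for $\ell\ne 0$, identify the single nontrivial higher differential $d_{m+1}$ at position $(-(k+m+1),m)$, lift it to a map $\phi$, and splice the two $d_1$-strands into $\mathbf{G}'$. Your explicit observation that monotonicity forces $e_i\le 0$ for $i\le k$ and $e_i\ge m+1$ for $i\ge k+m+1$, and your direct column-range check isolating the unique $d_{m+1}$, are slightly more detailed than the paper's treatment but amount to the same argument.
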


\begin{figure}
\begin{tabular}{|c||c|l|}%
\multicolumn{3}{c}{The complex $\mathbf{G}$}\\%
\hline
&$X$&$\times \,  \PP^2$\\ \hline
$\mathbf{G}_0$&$\mathcal G_0$&$\boxtimes\,  \cO_{\PP^2}(-e_0)$ \\%
$\mathbf{G}_1$&$\mathcal G_1$&$\boxtimes\,  \cO_{\PP^2}(-e_1)$ \\%
$\mathbf{G}_2$&$\mathcal G_2$&$\boxtimes\,  \cO_{\PP^2}(-e_2)$ \\
$\mathbf{G}_{3}$&$\mathcal G_{3}$&$\boxtimes \, \cO_{\PP^2}(-1)$ \\%
$\mathbf{G}_{4}$&$\mathcal G_{4}$&$\boxtimes \, \cO_{\PP^2}(-2)$ \\%
$\mathbf{G}_5$&$\mathcal G_5$&$\boxtimes\,  \cO_{\PP^2}(-e_5)$ \\
$\mathbf{G}_6$&$\mathcal G_6$&$\boxtimes\,  \cO_{\PP^2}(-e_6)$ \\
$\mathbf{G}_7$&$\mathcal G_7$&$\boxtimes\,  \cO_{\PP^2}(-e_7)$ \\
$\mathbf{G}_8$&$\mathcal G_8$&$\boxtimes\,  \cO_{\PP^2}(-e_8)$ \\
\hline%
\end{tabular}
$\xrightarrow{\quad Rp_{*} \quad }$
\begin{tabular}{|c||c|}%
\multicolumn{2}{c}{The complex $\mathbf{G}'$}\\%
\hline
&$X$\\ \hline
$\mathbf{G}'_0$&$\mathcal G_0\otimes H^0(\cO_{\PP^2}(-e_0))$ \\%
$\mathbf{G}'_1$&$\mathcal G_1\otimes H^0(\cO_{\PP^2}(-e_1))$ \\%
$\mathbf{G}'_2$&$\mathcal G_2\otimes H^0(\cO_{\PP^2}(-e_2))$ \\%
 - & -  \\%
- & - \\%
$\mathbf{G}'_3$&$\mathcal G_3\otimes H^2(\cO_{\PP^2}(-e_5))$ \\%
$\mathbf{G}'_4$&$\mathcal G_4\otimes H^2(\cO_{\PP^2}(-e_6))$ \\%
$\mathbf{G}'_5$&$\mathcal G_5\otimes H^2(\cO_{\PP^2}(-e_7))$ \\%
$\mathbf{G}'_6$&$\mathcal G_6\otimes H^2(\cO_{\PP^2}(-e_8))$ \\%
\hline%
\end{tabular}
\caption{Proposition~\ref{prop:pushforward} uses a pushforward and the 
vanishing cohomology of line bundles on $\PP^m$ to collapse terms from a
free resolution. The above illustrates the proposition
when $m=k=2$ and $N=8$.}
\label{fig:ESprop}
\end{figure}

\begin{proof}
We proceed in a matter similar to the proof of Lemma~\ref{lem:0q}. 
Our first goal is to show in two steps that $R^\ell p_{*}\mathbf{G}\ne 0$
if and only if $\ell=0$. First, since $\mathbf{G}$ is a resolution of $\cF$,
it follows that $R^\ell p_{*}\KK\ne 0$ only if $\ell\geq 0$. 

We now compute $R\pi_{1*}(\GG)$ in a second way to show that $R^\ell
\pi_{1*}\GG \ne 0$ only if $\ell\leq 0$.  For each $i$, let
$C^{-i,\bullet}$ be the \v{C}ech resolution of $\GG_i$ with respect to the
standard \v{C}ech cover $\{X\times U_0, \dots, X\times U_{m}\}$ of
$X\times \PP^{m}$.  Since the construction of \v{C}ech resolutions is
functorial, we obtain a double complex $C^{\bullet,\bullet}$ consisting of
$\pi_{1*}$-acyclic sheaves on $X\times \PP^m$.  To compute $R\pi_{1*}\GG$,
we apply $\pi_{1*}$ to the double complex $C^{\bullet,\bullet}$ and run the
vertical spectral sequence for the resulting double complex of
$\cO_X$-modules.
This yields an ${}_vE_1$-page with general entry 
\[
{}_vE_1^{-i,j} = \mathcal G_i\otimes H^j(\PP^m,\cO(-e_i)).
\]
Since $H^j(\PP^m,\cO(-e_i))=0$ unless $j=0$ or $m$, most of these entries
are equal to $0$.  In fact, the resulting ${}_vE_1$-page consists of a
strand of nonzero entries in row $0$, followed by all zeroes in columns
$k+1, \dots, k+m$, followed by a strand of nonzero entries in row $m$.
\[
{}_vE_1^{\bullet,\bullet}:\hspace{-1.25cm}
\xymatrix@C=1em@R=.8em{
& -i=\underline{k}& \underline{k+1}& &\underline{k+m}&\underline{k+m+1}&\\
\dots&0\ar[l]&0\ar[l]&\cdots\ar[l]&0\ar[l]&\mathcal G_{k+m+1}\otimes
H^m(\cO(-e_{k+m+1}))\ar[l]^{}&\dots\ar[l]_-{\qquad\quad
\partial_1^{-k-m-2,m}}\\
\dots&0\ar[l]&0\ar[l]&\dots\ar[l]&0\ar[l]&0\ar[l]&\dots\ar[l]\\
&\vdots&\vdots & &\vdots &\vdots &\\
\dots&0\ar[l]&0\ar[l]&\dots\ar[l]&0\ar[l]&0\ar[l]&\dots\ar[l]\\
\dots&\mathcal G_k\otimes
H^0(\cO(-e_k))\ar[l]&0\ar[l]&\dots\ar[l]&0\ar[l]&0\ar[l]&\dots\ar[l]
}
\]
Since all of the nonzero terms of this ${}_vE_1$-page lie in total
cohomological degree $-i+j\leq 0$, we see that $R^\ell \pi_{1*}\GG\ne 0$
only if $\ell\leq 0$, as desired.  

We have now nearly constructed our complex $\mathbf{G}'$.  The nonzero
entries on the ${}_vE_1$-page are precisely the terms we use in
$\mathbf{G}'$, and as its differential, we will use $\partial_1$ everywhere
except for the map $\mathbf{G}'_{k}\from \mathbf{G}'_{k+1}$:
\[
\xymatrix@C=2em{
\GG'_0&\GG'_1\ar[l]_-{\partial_1^{-1,0}}&\dots\ar[l]&\GG'_k\ar[l]_-{\partial_1^{-k,0}}&\GG'_{k+1}\ar[l]_-{\, ???}&&\GG'_{k+2}\ar[ll]_-{\partial_1^{-k-m-2,m}}&\dots\ar[l]
&\GG'_{N-m}\ar[l]_-{\partial_1^{-N,m}}&0.\ar[l]}
\]

To complete the construction of $\mathbf{G}'$ and to check its exactness,
we note that after the ${}_vE_2$-page, the only other differential exiting
or entering a nonzero term will occur on the ${}_vE_{m+1}$-page, from
position $(-i,j)=(-k-m-1,m)$ to $(-i,j)=(-k,0)$.  Since we have
${}_vE_1^{-i,j} \Rightarrow R^{-i+j}\pi_{1*}\GG$ 
and our previous computation shows that $R^\ell \pi_{1*}\GG\ne 0$ if and only if
$\ell= 0$, only positions $(-k,0)$ and $(-k-m-1,m)$ may contain nonzero
entries on the ${}_vE_2$-page. 
In particular, although we have not yet fully constructed the differential
for $\mathbf{G}'$, we already see that our complex is exact in every
position except possibly at $\GG'_k$ or $\GG'_{k+1}$.

We now examine the differential $\partial_{m+1}^{-k-m-1,m}$ on
${}_vE_{m+1}$. This differential must be an isomorphism when $k>0$, as
otherwise $R^{k+1}\pi_{1*}\GG$ and $R^{k}\pi_{1*}\GG$ would be nonzero.
When $k=0$, it must be injective for the same reason.
\[
{}_vE_{m+1}^{\bullet,\bullet}:\xymatrix@C=1.25em@R=1em{
0&0&0&\dots&\coker
\partial_1^{-k-m-2,m}\ar@{^{(}->}[llldd]^-{\partial_{m+1}^{-k-m-1,m}}&0\\
&\vdots &\vdots & &\vdots &\\
0&\ker \partial_1^{-k,1}&0&\dots&0&\dots
}
\]
This differential $\partial_{m+1}^{-k-m-1,m}$ lifts to a map
$\phi\colon \GG'_{k+1}\to \GG'_k$,
\[
\xymatrix{
\GG'_k=\cG_k\otimes H^0(\cO(-e_k)) &&&\GG_{k+1}'=\cG_k\otimes
H^m(\cO(-e_m))\ar@{-->}[lll]_-\phi\ar@{->>}[d]\\
\ker \partial_1^{-k,1}\ar@{_(->}[u]&&&\coker
\partial_1^{-k-m-2,m}\ar[lll]_-{\partial_{m+1}^{-k-m-1,m}}
}
\]
completing our construction of $\GG'$:
\[
\xymatrix{\GG'_0&\GG'_1\ar[l]_-{\partial_1^{-1,0}}&\dots\ar[l]&\GG'_k\ar[l]_-{\partial_1^{-k,0}}&\GG'_{k+1}\ar[l]_-{\phi}&&\GG'_{k+2}\ar[ll]_-{\partial_1^{-k-m-2,m}}&\dots\ar[l]
&\GG'_{N-m}\ar[l]_-{\partial_1^{-N,m}}&0.\ar[l]}
\]
It follows that $\GG'$ is exact at $\GG'_k$ and at $\GG'_{k+1}$.
Since $\GG'$ is acyclic, it follows that it is a resolution $\pi_{1*}\cF$, as
desired.
\end{proof}

Proposition~\ref{prop:pushforward} provides a tool to construct a pure free
resolution with a prescribed degree sequence. We illustrate this by
explaining how to construct a pure resolution of type $d=(0,3,5,6)$ over
$S=\Bbbk[x_1,x_2,x_3]$: see Figure~\ref{fig:bigone}.
Since the highest degree term has degree $6$, we define the ring
$S':=\kk[y_1, \dots, y_6]$ and consider a Koszul complex involving $6$
multilinear forms. The gaps in the degree sequence $d$ tell us how to
choose the projective spaces we use to collapse the various terms.  For
instance, this degree sequence has two gaps:  the gap between $0$ and $3$
consisting of the integers $\{1,2\}$; and the gaps between $3$ and $5$
consisting of $\{4\}$.  To collapse degrees $1$ and $2$, we will
use a copy of $\PP^2$; to collapse degree $4$, we will use a copy of
$\PP^1$.

We thus define a Koszul complex $\KK$ involving $6$ multidegree
$(1,1,1)$-forms on $\Spec(S')\times \PP^2\times \PP^1$, and we set
$\GG:=\KK\otimes_{\cO_{\Spec(S')\times \PP^2\times \PP^1}} (\cO_{\Spec S'}\boxtimes
\cO_{\PP^2}\boxtimes\cO_{\PP^1}(3))$. This twist of the Koszul complex is
engineered so that we are able collapse the proper terms, as shown in
Figure~\ref{fig:bigone}. Put another way, we have attached a line bundle
with vanishing cohomology to each of the terms in $\GG$ that we want to
collapse. By applying Proposition~\ref{prop:pushforward} twice to $\GG$, we
obtain a pure resolution of type $(0,3,5,6)$ on $\Spec S'$ that resolves a
Cohen--Macaulay module of codimension $3$.  Finally, we mod out by $3$
generic linear forms to obtain a pure resolution $\FF$ of type $(0,3,5,6)$
on $\Spec(S)$ that resolves a module of finite length: 
\[
\FF =  \bigg[ S^4\gets S(-3)^{20}\gets S(-5)^{36}\gets S(-6)^{20}\gets 0
\bigg]. 
\] 

\begin{figure}
\begin{tabular}{|c||c|l|l|}%
\multicolumn{4}{c}{The original complex $\mathbf{G}$}\\%
 \hline
&$\Spec(S')$&$\times \, \PP^2$&$\times \, \PP^1$\\ \hline
$\mathbf{G}_0$&$(S')^1$&$\boxtimes\,  \cO_{\PP^2}$&$\boxtimes\,
\cO_{\PP^1}(3)$ \\%
$\mathbf{G}_1$&$S'(-1)^6$ &$\boxtimes\,  \cO_{\PP^2}(-1)$&$\boxtimes\,
\cO_{\PP^1}(2)$ \\%
$\mathbf{G}_2$&$S'(-2)^{15}$ &$\boxtimes\,  \cO_{\PP^3}(-2)$&$\boxtimes\,
\cO_{\PP^1}(1)$ \\%
$\mathbf{G}_3$&$S'(-3)^{20}$ &$\boxtimes\,  \cO_{\PP^3}(-3)$&$\boxtimes\,
\cO_{\PP^1}$\\%
$\mathbf{G}_4$&$S'(-4)^{15}$ &$\boxtimes \, \cO_{\PP^3}(-4)$ &$\boxtimes\,
\cO_{\PP^1}(-1)$ \\%
$\mathbf{G}_5$&$S'(-5)^6$ &$\boxtimes\,  \cO_{\PP^3}(-5) $&$\boxtimes\,
\cO_{\PP^1}(-2)$ \\%
$\mathbf{G}_6$&$S'(-6)^1$ &$\boxtimes \, \cO_{\PP^3}(-6)$ &$\boxtimes\,
\cO_{\PP^1}(-3)$ \\%
\hline%
\end{tabular}

$\xymatrix{ \ar[d]_{R\pi_{3*}} \\ \  }$

\begin{tabular}{|c||c|l|}%
\multicolumn{3}{c}{The complex $\mathbf{G}'$ after one projection}
\\ \hline
&$\Spec(S')$&$\times\,  \PP^1$\\ \hline
$\mathbf{G}'_0$&$(S')^1$&$\boxtimes \, \cO_{\PP^2} \otimes
H^0(\cO_{\PP^1}(3))$ \\%
$\mathbf{G}'_1$&$S'(-1)^6$ &$\boxtimes\,  \cO_{\PP^2}(-1) \otimes
H^0(\cO_{\PP^1}(2))$ \\%
$\mathbf{G}'_2$&$S'(-2)^{15}$ &$\boxtimes\,  \cO_{\PP^3}(-2)\otimes
H^0(\cO_{\PP^1}(1))$ \\%
$\mathbf{G}'_3$&$S'(-3)^{20}$ &$\boxtimes\,  \cO_{\PP^3}(-3)\otimes H^0(
\cO_{\PP^1})$\\%
-& -& \hspace{1.8cm} - \\%
$\mathbf{G}'_4$&$S'(-5)^6$ &$\boxtimes\,  \cO_{\PP^3}(-5) \otimes
H^1(\cO_{\PP^1}(-2))$ \\%
$\mathbf{G}'_5$&$S'(-6)^1$ &$\boxtimes\,  \cO_{\PP^3}(-6)\otimes
H^1(\cO_{\PP^1}(-3))$ \\%
\hline%
\end{tabular}

$\xymatrix{ \ar[d]_{R\pi_{2*}} \\ \  }$

\begin{tabular}{|c||c|}%
\multicolumn{2}{c}{The pure resolution $\FF$}
\\ \hline
&$\Spec(S')$\\ \hline
$\FF_0$&$(S')^1\otimes H^0(\cO_{\PP^2}) \otimes H^0(\cO_{\PP^1}(3))$ \\%
-&- \\%
-&- \\%
$\FF_1$&$S'(-3)^{20}\otimes H^2(\cO_{\PP^3}(-3))\otimes H^0(
\cO_{\PP^1})$\\%
-& - \\%
$\FF_2$&$S'(-5)^6\otimes H^2(\cO_{\PP^3}(-5)) \otimes H^1(\cO_{\PP^1}(-2))$
\\%
$\FF_3$&$S'(-6)^1\otimes H^2(\cO_{\PP^3}(-6))\otimes H^1(\cO_{\PP^1}(-3))$
\\%
\hline%
\end{tabular}

\caption{
We iterate Proposition~\ref{prop:pushforward} to build a pure
resolution $\FF$ of type $(0,3,5,6)$ over $S'$.   Modding out by
linear forms yields a resolution over $S$.
}
\label{fig:bigone}
\end{figure}

\begin{proof}[Proof of Theorem~\ref{thm:exist std}]
Without loss of generality, we may assume that $d_0=0$. 
We define $S'=\kk[y_1, \dots, y_{d_n}]$.  It suffices to construct
a Cohen--Macaulay $S'$-module of codimension $n$ with a pure resolution of
type $d$, as we may then mod out by generic linear forms to obtain a pure
resolution of a finite length $S$-module.

We define an auxiliary space $\PP$ which is a
product of projective spaces corresponding to the gaps in the degree
sequence $d=(d_0<d_1<\dots<d_n)$. To record these gaps, set 
\[
m_i:= d_i-d_{i-1}-1 \quad\text{for } 1\leq i\leq n.
\]
Set $\PP:=\PP^{m_1}\times \dots \times \PP^{m_n}$, which has dimension
$d_n-n$. 
Choose $d_n$ generic multilinear forms of multidegree
$(1,1,\dots,1)$. Since $\Bbbk$ is an infinite field, these forms give a
regular sequence.\footnote{In fact, this is also true over a finite field
by~\cite{eis-schrey1}*{Proposition~5.2}.}  Let $\KK$ denote the Koszul
complex on these multilinear forms, and define
\[
\GG:=\KK\otimes (\cO_{\Spec(S')} \boxtimes \cO_{\PP^{m_1}}\boxtimes
\cO_{\PP^{m_2}}(-d_1)\boxtimes  \dots \boxtimes \cO_{\PP^{m_n}}(-d_{n-1})).
\]
Note that $\GG$ is an exact complex with 
\[
\GG_i = S'(-i)^{\binom{d_n}{i}}\boxtimes \cO_{\PP^{m_1}}(-i)\boxtimes
\cO_{\PP^{m_2}}(-d_1-i)\boxtimes \dots \boxtimes
\cO_{\PP^{m_n}}(-d_{n-1}-i). 
\]

By repeatedly applying Proposition~\ref{prop:pushforward} (the order in
which we pushforward does not matter), all terms from $\GG$ will eventually
be collapsed away with exception of $\GG_{d_i}$ for $0 \leq i\leq n$.  More
precisely, when we push away from $\PP^{m_i}$,
Proposition~\ref{prop:pushforward} implies that we will collapse away the
terms that originally corresponded to $\GG_{d_{i}+1}, \dots
\GG_{d_{i+1}-1}$.

This process produces a pure resolution $\FF$ of graded $S'$-modules, where
\[
\FF_k = S'(-d_k)^{\binom{d_n}{k}}\otimes \bigotimes_{i=1}^{k-1}
H^0(\PP^{m_i},\cO(-d_{i-1}-k)) \otimes
\bigotimes_{i=k}^{n}H^{m_i}(\PP^{m_i},\cO(-d_{i-1}-k)). 
\]
Since $\GG$ resolves a module of codimension $d_n$ and the fibers of the
projection $p: X\times \PP\to X$ have dimension $d_n-n$, it follows that
the cokernel of $\FF$ has support of codimension at least $n$. However,
since $\FF$ is a resolution of projective dimension $n$, we conclude that
the cokernel of $\FF$ is a Cohen--Macaulay $S'$-module of codimension $n$,
as desired. 
\end{proof}

If one works with the base scheme $\Proj(S)$ instead of $\Spec(S)$, then
there is a slightly different argument which eliminates the need to pass to
the intermediate ring $S'$, but this requires different steps to check
exactness.  This was Eisenbud and Schreyer's original approach
in~\cite{eis-schrey1}*{\S5}.

\begin{remark}
There is a useful shorthand for reverse-engineering the Eisenbud--Schreyer
construction of a pure resolution. For instance, to construct a pure
resolution of type $(0,3,5,6)$, begin by considering the table on the left,
where we have marked with an asterisk the degrees that we need to collapse.
We may then use a copy of $\PP^2$ to collapse the first two asterisks and a
copy of $\PP^1$ to collapse the last asterisk. To do so, line up integers
as in the middle table so that the vanishing cohomology degrees of $\PP^2$
and $\PP^1$ align with the asterisks.  Now fill in the remaining entries of
the table linearly.
\[
\begin{tabular}{|c|c|} \hline
$\Spec(S')$&\\ \hline
\phantom{-}0&\\ \hline
-1&$\ast$\\ \hline
-2&$\ast$\\ \hline
-3&\\ \hline
-4&$\ast$\\ \hline
-5&\\ \hline
-6&\\ \hline
\end{tabular}
\xrightarrow{\hspace{1.5cm}}
\begin{tabular}{|c|c|c|} \hline
$\Spec(S')$&$\PP^2$&$\PP^1$\\ \hline
\phantom{-}0&&\\ \hline
-1&-1&\\ \hline
-2&-2&\\ \hline
-3&&\\ \hline
-4&&-1\\ \hline
-5&&\\ \hline
-6&&\\ \hline
\end{tabular}
\xrightarrow{\hspace{1.5cm}}
\begin{tabular}{|c|c|c|} \hline
$\Spec(S')$&$\PP^2$&$\PP^1$\\ \hline
\phantom{-}0&\phantom{-}0&\phantom{-}3\\ \hline
-1&-1&\phantom{-}2\\ \hline
-2&-2&\phantom{-}1\\ \hline
-3&-3&\phantom{-}0\\ \hline
-4&-4&-1\\ \hline
-5&-5&-2\\ \hline
-6&-6&-3\\ \hline
\end{tabular}
\]
The last table tells us that we should build a Koszul complex of six
$(1,1,1)$-forms on $\Spec(S')\times \PP^2\times \PP^1$ and then twist by
the degrees we see in the top row: $\cO(0,0,3)$.  Note that this is precisely
the construction from Figure~\ref{fig:bigone}.
\end{remark}

Although the proof of Theorem~\ref{thm:exist std} is constructive, it does
not provide an efficient technique for understanding the differentials of
the resulting pure resolution $\FF$.  To obtain explicit formulas for the
differentials from the proof, we would have to carry a description of the
differential through the spectral sequence.

A more efficient approach to understanding the differentials of these
Eisenbud--Schreyer pure resolutions is given in~\cite{beks-tensor}*{\S4}.
That article constructs a generic version of the Eisenbud--Schreyer pure
resolution, referred to as a \defi{balanced tensor complex}, which is
defined over a polynomial ring in many more variables.  The differentials
for the tensor complex can be expressed in terms of explicit multilinear
constructions (e.g., (co)multiplication maps on symmetric and exterior
products, among others). Since the Eisenbud--Schreyer pure resolutions are
obtained as specializations of balanced tensor
complexes~\cite{beks-tensor}*{Theorem~10.2}, this construction provides
closed formulas for the various differentials in the Eisenbud--Schreyer
pure resolutions.

\begin{example}
There is a Macaulay2 package {\tt TensorComplexes} that can be used to
compute the Eisenbud--Schreyer pure resolutions explicitly~\cite{M2}.  With
$\kk=\mathbb F_{101}$, the following code computes the first differential
for a pure resolution of type $(0,1,3,5)$.
\begin{verbatim}
i1 : loadPackage "TensorComplexes";
i2 : FF = pureResES({0,1,3,5},ZZ/101);
i3 : betti FF
            0  1  2 3
o3 = total: 8 15 10 3
         0: 8 15  . .
         1: .  . 10 .
         2: .  .  . 3

i4 : FF.dd_1
o4 = | x_0 0   0   0   x_1 0   0    0   x_2 0   0   0   0   0   0   |
     | 0   x_0 0   0   0   x_1 0    0   0   x_2 0   0   0   0   0   |
     | 0   0   x_0 0   0   0   x_1  0   0   0   x_2 0   0   0   0   |
     | 0   0   0   x_0 0   0   0    x_1 0   0   0   x_2 0   0   0   |
     | 0   0   0   x_2 x_0 0   -x_2 0   x_1 x_2 0   0   0   0   0   |
     | 0   0   0   0   0   x_0 0    0   0   x_1 0   0   x_2 0   0   |
     | 0   0   0   0   0   0   x_0  0   0   0   x_1 0   0   x_2 0   |
     | 0   0   0   0   0   0   0    x_0 0   0   0   x_1 0   0   x_2 |
\end{verbatim}
\end{example}

\section{Extremal Betti Tables in the Local case}
\label{sec:local}

Let $M$ be a finitely generated module over a regular local ring $R$ of
dimension $n$. From the minimal free resolution of $M$, 
\[
0 \gets M \gets R^{\beta^R_0(M)}\gets R^{\beta^R_1(M)}\gets \cdots\gets
R^{\beta^R_n(M)}\gets 0,
\]
we obtain the \defi{(local) Betti table} $\beta^R(M)=(\beta^R_0(M), \dots,
\beta^R_n(M))$. Here we again restrict our attention to the case when $M$ is
of finite length. 

As in the graded case, we would like to find modules $M$ of finite length
where $\beta^R(M)$ is extremal.  However, unlike the graded case, there are
no natural candidates for such vectors. It turns out that this is because
no local Betti table is extremal.

\begin{sufficient condition}\label{cond:2}
If $\dim(R)>1$, then there does not exist any $R$-module $M$ of finite
length whose Betti table is extremal.
\end{sufficient condition}

\begin{example}
Let $R=\kk[[x,y]]$.  Then the local Betti table of the residue field $\kk$
is $\beta^R(\kk)=(1,2,1)$.  If $M=R/\langle x^2,xy,y^2\rangle $ and $N=\Hom(M,\kk)$, then
we have the decomposition
\[
(1,2,1)=\beta^R(\kk)=\tfrac{1}{3}\beta^R(M)+\tfrac{1}{3}\beta^R(N)=\tfrac{1}{2}(1,3,2)+\tfrac{1}{2}(2,3,1).
\]
\end{example}

To understand how this comes to pass, we now assume that $n>1$ and view
each $\beta^R(M)\in \QQ^{n+1}$. An extremal local Betti table corresponds
to a ray of the cone 
\[
B_\QQ^\text{fin}(R) := \QQ_{\geq 0}\cdot  \left\{ \beta^R(M) \mid M \text{
is an } R\text{-module of finite length}\right\} \subset \QQ^{n+1}.
\] 

\begin{thm}[\cite{beks-local}*{Theorem~1.1}]
\label{thm:local}
If $R$ is an $n$-dimensional regular local ring with $n>1$, then
$B_\QQ^\text{fin}(R)$ is an open cone that has no extremal rays. More
precisely, 
\[
B_\QQ^\text{fin}(R) = 
\QQ_{>0} \cdot \{ \rho_0,\rho_1,\dots,\rho_{n-1}\},
\]
where $\rho_i = e_i+e_{i+1}$ is the sum of the $i$th and $(i+1)$st standard
basis vectors of $\QQ^{n+1}$.
\end{thm}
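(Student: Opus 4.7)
The plan is to prove the equality $B_\QQ^\text{fin}(R) = \QQ_{>0}\cdot\{\rho_0, \ldots, \rho_{n-1}\}$ directly; openness and the absence of extremal rays are then immediate, since the extremal rays of the closed cone spanned by $\rho_0, \ldots, \rho_{n-1}$ are precisely the half-lines $\QQ_{\geq 0}\rho_i$, which are excluded by the strict positivity of coefficients. I treat the two inclusions separately.

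For $B_\QQ^\text{fin}(R) \subseteq \QQ_{>0}\cdot\{\rho_i\}$: let $M$ be a finite length $R$-module with Betti numbers $(b_0, \ldots, b_n)$. By Auslander--Buchsbaum, $\projdim M = n$, so $b_n \neq 0$; and since $M$ has rank $0$ at the generic point of $\Spec R$, we have $\chi := \sum_i (-1)^i b_i = 0$. The vectors $\rho_0, \ldots, \rho_{n-1}$ are linearly independent and span the hyperplane $\{\chi = 0\}\subset\QQ^{n+1}$, so there is a unique decomposition $\beta^R(M) = \sum_{i=0}^{n-1} c_i \rho_i$; an elementary computation using $\chi = 0$ yields $c_i = \sum_{j=i+1}^n (-1)^{j-i-1} b_j$. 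But this is exactly the rank of the $(i+1)$st differential $\partial_{i+1}$ of the minimal free resolution of $M$, by the Buchsbaum--Eisenbud ``what makes a complex exact'' criterion. Since the minimal resolution has length exactly $n$ and each of its differentials is nonzero, every such rank is strictly positive, so $c_i > 0$ for $0 \leq i \leq n-1$.

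For $\QQ_{>0}\cdot\{\rho_i\} \subseteq B_\QQ^\text{fin}(R)$: because $B_\QQ^\text{fin}(R)$ is closed under positive rational combinations (via direct sums and multiplicity, after clearing denominators), it suffices to exhibit a family of finite length $R$-modules whose Betti tables positively generate the whole open cone. A convenient source is Theorem~\ref{thm:exist std}: each strictly increasing degree sequence $d = (d_0 < \cdots < d_n)$ yields a finite length graded $S$-module $M_d$ with pure resolution of type $d$; viewed over $R = S_{\fm}$, its local Betti table is proportional, by Remark~\ref{rmk:HK}, to $\bigl(\tfrac{1}{\prod_{j\neq i} |d_i - d_j|}\bigr)_{i=0}^n$. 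The final step is to check that as $d$ varies, these rays cover every ray of the open cone $\QQ_{>0}\cdot\{\rho_i\}$. For $n = 2$ this is immediate: the sequence $(0, a, a+b)$ scales to $(b, a+b, a) = b\rho_0 + a\rho_1$, and $(a,b)$ ranging over positive rationals exhausts the open cone. For general $n$ one performs the analogous parametric calculation, or alternatively uses the family $\{R/\fm^k\}$ and their Matlis duals as an explicit source of tables.

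The main obstacle is the second inclusion: one must verify that the ``pure-type'' local Betti tables (or the explicit family $\{R/\fm^k\}$) are genuinely rich enough to cover every interior ray, not merely some proper subcone. The first inclusion reduces to classical facts (Auslander--Buchsbaum and Buchsbaum--Eisenbud acyclicity), but the second demands a more delicate surjectivity check for the parametrization, which becomes combinatorially heavier as $n$ grows.
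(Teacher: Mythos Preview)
Your first inclusion matches the paper's: both identify the coefficients $c_i$ in $\beta^R(M) = \sum c_i\rho_i$ with the ranks of the syzygy modules, which are strictly positive for a finite-length module of projective dimension $n$.

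The second inclusion has a genuine gap, which you flag but do not close. The family $\{R/\fm^k\}$ together with Matlis duals is demonstrably insufficient for $n>2$: already for $n=3$, $R/\fm^k$ has pure type $(0,k,k+1,k+2)$ and yields coefficients $(c_0,c_1,c_2) = (2,\,k(k+3),\,k(k+1))$, while Matlis duality merely swaps $c_0\leftrightarrow c_2$; thus every table in either family satisfies the linear inequalities $c_1 > c_0$ and $c_1 > c_2$, which persist under positive combinations, so the convex hull misses for instance the ray through $(2,1,2)$. Your alternative---the ``analogous parametric calculation'' for arbitrary degree sequences---would require the Herzog--K\"uhl map from degree sequences to rays to be surjective onto all positive rational rays, a claim you neither prove nor make plausible for $n>2$. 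The paper's key idea is instead a \emph{limiting argument}: for each $i$ one chooses degree sequences $d^{i,j}$ (with $d^{i,j}_k = kj$ for $k\le i$ and $(k-1)j+1$ for $k>i$) whose rescaled pure Betti tables converge to $\rho_i$ as $j\to\infty$; convexity of $B_\QQ^{\text{fin}}(R)$ then forces it to contain the whole open interior. Your $n=2$ computation is a pleasant shortcut there, but the limit is what works uniformly. (A secondary gap: you take $R = S_\fm$, but the theorem is for arbitrary regular local $R$; the paper handles this via Lemma~\ref{lem:to local}.)
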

The story for finitely generated modules is similar;
see~\cite{beks-local}*{\S4}. Of course, if $\dim(R)=1$, then $\rho_0$ is an extremal ray, as it spans
the entire cone. 

\begin{proof}[Proof of Theorem~\ref{thm:local}]
For brevity, set $C:= \QQ_{>0} \cdot \{
\rho_0,\rho_1,\dots,\rho_{n-1}\}$. 
Clearly the cone $C$ lies in the linear subspace of $\QQ^{n+1}$ defined by
$\sum_{k=0}^n (-1)^k\beta^R_k=0$.  Inside this subspace, an elementary
computation confirms that $C$ equals the open cone defined by the
inequalities:
\[
0<\sum_{k=i}^n(-1)^{i-k}\beta^R_k \quad \text{for } 1 \leq i\leq n.  
\]
When applied to the Betti numbers of a module $M$, the above sum is a
partial Euler characteristic (computed from the back of the resolution)
that computes the rank of the $k$th syzygy module of $M$.  In particular,
each such linear functional is strictly nonnegative when evaluated on
the Betti table of a finite length module, and hence we have
$B_\QQ^\text{fin}(R)\subseteq C$.

The reverse containment $C\subseteq B_\QQ^\text{fin}(R)$ requires a
limiting argument.  We show that for each $i$, there is a sequence of pairs
of positive scalars and modules $\{(\lambda_{i,j}, M_{i,j})\}_{j=1}^\infty$
such that
\[
\rho_i = \lim_{j\to\infty} \lambda_{i,j} \beta^R(M_{i,j}).
\]
The key fact used in the construction of these $R$-modules is that there
exist local ring analogues to the $S$-modules with pure resolutions
constructed in Theorem~\ref{thm:exist std}.  Thus, given a degree
sequence $d\in \ZZ^{n+1}$, we may construct an $R$-module $M(d)$ whose
total Betti numbers are computed (up to scalar multiple) by the
Herzog--K\"uhl equations.    The precise existence statement for the
$R$-module $M(d)$ is given in Lemma~\ref{lem:to local} below.

As noted in Remark~\ref{rmk:HK}, the Betti table of $M(d)$ is, up to scalar
multiple, given by
\begin{align*}
\mathfrak b(d) 
&=\prod_{\ell\ne i}
|d_\ell-d_i|\left(\frac{1}{\prod_{\ell\ne 0} |d_\ell-d_0|},
\frac{1}{\prod_{\ell\ne 1} |d_\ell-d_1|}, \dots, \frac{1}{\prod_{\ell\ne n}
|d_\ell-d_n|}\right)\\
&=\left(\frac{\prod_{\ell\ne i}
|d_\ell-d_i|}{\prod_{\ell\ne 0} |d_\ell-d_0|}, \frac{\prod_{\ell\ne i}
|d_\ell-d_i|}{\prod_{\ell\ne 1} |d_\ell-d_1|}, \dots, \frac{\prod_{\ell\ne
i}
|d_\ell-d_i|}{\prod_{\ell\ne n} |d_\ell-d_n|}\right)\in \QQ^{n+1}.
\end{align*}
Note that $\mathfrak b(d)_i=1$. 
By carefully choosing degree sequences $d^{i,j}$, we will
realize $\rho_i$ as the desired limit using $M_{i,j}=M(d^{i,j})$.  To make this choice, set 
$d^{i,j}:=(0, j, 2j, \ldots, ij, ij+1,(i+1)j+1, \ldots, (n-1)j+1)$, 
so that 
\[
d^{i,j}_k=\begin{cases}
kj &\text{if $k\leq i$,}\\
(k-1)j+1 &\text{if $k>i$.} 
\end{cases}
\]
For these degree sequences, the Herzog--K\"uhl equations imply that, as
$i\to \infty$, the $i$th and $(i+1)$st Betti numbers go to infinity more
quickly than the other Betti numbers do.  Of course, this limit does not
make sense for graded Betti numbers.
In the local case, where the Betti numbers are ungraded, we may consider
such limits.

We thus set $M_{i,j}:=M(d^{i,j})$ and
$\lambda_{i,j}:=\frac{1}{\beta^R_i(M(d^{i,j}))}$. This yields
\[
\lambda_{i,j}\beta^R(M_{i,j})=\mathfrak b(d^{i,j}),
\]
since they are equal up to scalar multiple and the $i$th entry in both
vectors is equal to $1$. 

We now claim that $\lim_{j\to \infty} \mathfrak b(d^{i,j})=\rho_i$.  By construction,
the limit equals $1$ in the $i$th position.  Also, each element
$\mathfrak b(d^{i,j})$ lies in the linear subspace given by $\sum_{k=0}^n
(-1)^k\beta_k=0$. Thus it suffices to show that $\lim_{j\to \infty}
\mathfrak b(d^{i,j})_k=0$ for $k\ne i,i+1$, which we directly compute:
\begin{align*}
  \lim_{j\to \infty} \mathfrak b(d^{i,j})_k
  &=\lim_{j\to\infty}
    \frac{\prod_{\ell \ne j} |d^{i,j}_\ell-d^{i,j}_i |}{\prod_{\ell \ne k}
    |d^{i,j}_\ell-d^{i,j}_k|}
  =\lim_{j\to\infty}\frac{O(j^{n-1})}{O(j^{n})} =0.
\end{align*}
Thus $B_\QQ^\text{fin}(R)$ contains points that are arbitrarily close to each $\rho_i$.  Since $C$ equals the interior of the closed cone spanned by the $\rho_i$, we have shown that $B_\QQ^\text{fin}(R)$ contains $C$, as desired.
\end{proof}

The following lemma is proven in \cite{beks-local}*{Proposition~2.1}.
\begin{lemma}\label{lem:to local}
Let $R$ be an $n$-dimensional regular local ring, and let $d=(d_0, \dots,
d_n)$ be a degree sequence.  If $N$ is the cokernel of the pure resolution
of type $d$ constructed in Theorem~\ref{thm:exist std}, then there exists a
finite length $R$-module $M(d)$ where $\beta_i^R(M(d))=\beta_{i,d_i}(N)$.
\end{lemma}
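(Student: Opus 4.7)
The plan is to apply Theorem~\ref{thm:exist std} to construct a graded module with the desired pure resolution over a polynomial ring, and then transport it to $R$ via $\fm$-adic completion. Two standard facts drive the argument: finite-length modules and their Betti numbers are preserved by the flat base change from a regular local ring to its completion, and a finite-length module over a complete local ring is automatically a module over the original ring (since it is annihilated by a power of the maximal ideal, and the quotients of $R$ and $\hat R$ by such powers coincide).

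Concretely, let $\kk$ be the residue field of $R$ (or an infinite extension, if needed to invoke the characteristic-free statement of Theorem~\ref{thm:exist std}) and set $S = \kk[x_1,\ldots,x_n]$. Theorem~\ref{thm:exist std} produces a finite-length graded $S$-module $N$ with pure resolution of type $d$ and graded Betti numbers $\beta_{i,d_i}(N)$. Since $N$ has finite length, $N = N_{\fm_S}$ for $\fm_S = (x_1,\ldots,x_n)$, and its pure resolution remains a minimal free resolution upon localizing and $\fm_S$-adically completing. This produces a module $\widehat N$ over $\kk[[x_1,\ldots,x_n]]$ with total Betti numbers $\beta_{i,d_i}(N)$. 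In the equicharacteristic case, Cohen's structure theorem identifies $\hat R$ with $\kk[[x_1,\ldots,x_n]]$, so $\widehat N$ transports to an $\hat R$-module $M$ with Betti numbers $\beta_{i,d_i}(N)$ over $\hat R$. Since $M$ has finite length, it is annihilated by some $\fm_R^k$ and hence is canonically a module over $R/\fm_R^k = \hat R / (\fm_R\hat R)^k$, in particular an $R$-module; faithful flatness of $R\to \hat R$ then gives $\beta_i^R(M) = \beta_i^{\hat R}(M) = \beta_{i,d_i}(N)$, as desired.

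The principal obstacle is the mixed-characteristic case, where $\hat R$ is not a power series ring over $\kk$ but rather over a Cohen ring $V$. To handle this, I would reprise the Eisenbud--Schreyer construction of Section~\ref{sec:graded} directly over $V$: the pushforward argument of Proposition~\ref{prop:pushforward} is characteristic-free, and it goes through for the Koszul complex of sufficiently generic multilinear forms on $\Spec(V[x_1,\ldots,x_{n-1}])\times \PP^{m_1}\times\cdots\times\PP^{m_n}$, provided the forms can be arranged into a regular sequence (which follows by a standard prime-avoidance argument over $V$, with the uniformizer of $V$ playing the role of a variable in the Koszul complex). Once a pure resolution of type $d$ is constructed over $\hat R$ in mixed characteristic, the same finite-length descent argument returns the desired $R$-module $M(d)$.
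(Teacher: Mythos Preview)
The paper does not prove this lemma here; it defers to \cite{beks-local}*{Proposition~2.1}. Your strategy---pass to $\hat R$, use that finite-length modules and their Betti numbers are unchanged by completion and descend along $R\to\hat R$ via the identification $R/\fm^k\cong \hat R/(\fm\hat R)^k$, then invoke Cohen's structure theorem---is the approach taken in that reference.

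Two remarks. First, the parenthetical about passing to an infinite extension of the residue field is problematic: Cohen's theorem identifies $\hat R$ with a power series ring over the actual residue field $\kk$ (or over a Cohen ring with residue field $\kk$), not over an extension, so Theorem~\ref{thm:exist std} must be run over $\kk$ itself. This is harmless, since (as the footnote in the proof of Theorem~\ref{thm:exist std} records) \cite{eis-schrey1}*{Proposition~5.2} supplies the required regular sequence of multilinear forms over any field, in fact with integer coefficients. Second, that same observation is what makes the mixed-characteristic case go through cleanly: because the Eisenbud--Schreyer pure resolution already exists over a polynomial ring over $\ZZ$, one may base-change the whole complex to $\hat R$ by sending the polynomial variables to a regular system of parameters, so no separate construction over the Cohen ring $V$ is needed. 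Your sketch of rerunning Proposition~\ref{prop:pushforward} over $V$ with the uniformizer ``playing the role of a variable'' would also work, but it is more delicate to make precise than simply specializing the integral construction.
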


\section{Extremal Betti Tables in the Multigraded case}
\label{sec:multi}
Whereas in the previous section, we considered regular local rings, we now
move in the opposite direction by refining the grading on the polynomial
ring.  As we will see, this greatly increases the complexity of the
situation. The results discussed in this section stem from our original
work plus extended discussions with Eisenbud and Schreyer.

We restrict attention to the simplest example of a finely graded polynomial ring, namely $T:=\kk[x,y]$ with the
bigrading $\deg(x)=(1,0)$ and $\deg(y)=(0,1)$.  We seek
$T$-modules $M$ of finite length such that $\beta^T(M)$ is extremal. 

Over $S$, extremal was synonymous with having a pure resolution, but over
$T$ this is not the case.  In fact, there cannot exist a finite length
module $M$ with a resolution of $\FF$ where each $\FF_i$ is generated in a
single bidegree.  This is because $T$ is finely graded, so the
cokernel of any map 
$
T^a(-\mu_1,-\mu_2)\, \longleftarrow \, T^b(-\lambda_1,-\lambda_2)
$ 
has codimension at most $1$.

There are, however, other natural candidates for extremal Betti tables.
For instance, in the standard $\ZZ$-graded case, every pure resolution over
$\kk[x,y]$ can be a realized by taking the resolution of a quotient of
monomial ideals~\cite{boij-sod1}*{Remark~3.2}.  Since each of these modules
is naturally bigraded, we might expect that these provide extremal Betti
tables in the bigraded sense, as well as in the graded sense. While this is
quite often the case (see Example~\ref{ex:monomials}), there are many other
extremal bigraded Betti tables as well.

To describe a sufficient condition for extremality, we introduce the notion
of the \defi{matching graph} $\Gamma(M)$ of a bigraded $T$-module of finite
length. By imposing rather weak conditions on matching graphs, we produce a
wide array of bigraded $T$-modules with extremal Betti tables. This
illustrates the additional complexity that arises from refined gradings.

\begin{sufficient condition}\label{cond:3}
Let $M$ be a bigraded $T$-module of finite length.  If its matching graph
$\Gamma(M)$ is $(1,1)$-valent and connected, then $\beta^T(M)$ is extremal.
\end{sufficient condition}

For a bigraded $T$-module $M$ of finite length, let $\FF$ be the bigraded
minimal free resolution of $M$. The \defi{matching graph} of $M$ is a graph
whose vertices have weights in $\ZZ$ and whose edges are of two types:
$x$-edges and $y$-edges.  The vertices correspond to the degrees of the
generators of the $\FF_i$; to a vertex $\alpha \in \ZZ^2$, we assign the
weight $\beta^T_{0,\alpha}(M)+\beta^T_{1,\alpha}(M)+\beta^T_{2,\alpha}(M)$.
We then include an $x$-edge (or $y$-edge, respectively) between any two
vertices with the same $x$-degree (or $y$-degree). 

If a vertex of $\Gamma(M)$ meets precisely $a$ of the $x$-edges and
precisely $b$ of the $y$-edges, then we say that this vertex has
\defi{valency} $(a,b)$.  If all of the vertices of $\Gamma(M)$ have valency
$(a,b)$, then we say that $\Gamma(M)$ is an $(a,b)$-valent graph.  In
addition, we say that $\Gamma(M)$ is \defi{connected} if the underlying
graph (i.e., the graph on the same vertices whose edges are the union of
the $x$-edges and $y$-edges of $\Gamma(M)$) is connected.

\begin{example}
Let $M=T/\<x^2,xy,y^2\>$.  The minimal free resolution of $M$ has the form 
\[
T^1\longleftarrow \begin{matrix} T^1(-2,0)\\ \oplus \\ T^1(-1,-1) \\ \oplus
\\ T^1(0,-2) \end{matrix} \longleftarrow \begin{matrix} T^1(-2,-1)\\ \oplus
\\ T^1(-1,-2)  \end{matrix}\longleftarrow 0.
\]
Using the natural embedding of the matching graph $\Gamma(M)$ in the first
orthant, $\Gamma(M)$ has $x$-edges as shown in the figure on the left. 
\[
\begin{tikzpicture}[scale=.75]
\filldraw (0,0) circle (2.5pt);
\filldraw (2,0) circle (2.5pt);
\filldraw (1,1) circle (2.5pt);
\filldraw (0,2) circle (2.5pt);
\filldraw (2,1) circle (2.5pt);
\filldraw (1,2) circle (2.5pt);
\draw[thin,dotted] (1,0)--(1,2);
\draw[thin,dotted] (0,0)--(0,2);
\draw[thin,dotted] (2,0)--(2,2);
\draw[thin,dotted] (0,0)--(2,0);
\draw[thin,dotted] (0,1)--(2,1);
\draw[thin,dotted] (0,2)--(2,2);
\draw[thick] (0,0)--(0,2);
\draw[thick] (2,0)--(2,1);
\draw[thick] (1,1)--(1,2);
\end{tikzpicture}
\hspace{2.5cm}
\begin{tikzpicture}[scale=.75]
\filldraw (0,0) circle (2.5pt);
\filldraw (2,0) circle (2.5pt);
\filldraw (1,1) circle (2.5pt);
\filldraw (0,2) circle (2.5pt);
\filldraw (2,1) circle (2.5pt);
\filldraw (1,2) circle (2.5pt);
\draw[thin,dotted] (1,0)--(1,2);
\draw[thin,dotted] (0,0)--(0,2);
\draw[thin,dotted] (2,0)--(2,2);
\draw[thin,dotted] (0,0)--(2,0);
\draw[thin,dotted] (0,1)--(2,1);
\draw[thin,dotted] (0,2)--(2,2);
\draw[thick] (0,0)--(0,2)--(1,2) -- (1,1) --(2,1) -- (2,0) -- (0,0);
\end{tikzpicture}
\]
We omit the weights on the vertices, since all weights are $1$.  
The graph $\Gamma(M)$ appears on the right, and is $(1,1)$-valent and connected.  Hence $\beta^T(M)$ is
extremal by Claim~\ref{cond:3}. 
\end{example}
\begin{example}
Let $M=\langle x,y\rangle / \langle x^2,xy^2,y^3\rangle$.  Then $\Gamma(M)$ fails to be $(1,1)$-valent.  In fact, at each vertex of the form $(1,*)$, there are $3$ $x$-edges.  In this case,
$
\beta^T(M)$ equals $\beta^T(\langle x\rangle / \langle x^2,xy^2 \rangle) + \beta^T(\langle y\rangle /\langle xy,y^3 \rangle).
$
These last two Betti tables are extremal by Claim~\ref{cond:3}.
\end{example}

\begin{proof}[Proof of Claim~\ref{cond:3}]
For any $\lambda_1\in \ZZ$, we can consider the subgraph of $\Gamma(M)$
obtained by restricting to the vertices of $\Gamma(M)$ whose degrees have
the form $(\lambda_1,*)$.  By definition of the $x$-edges, there will be an
$x$-edge between any two vertices of this subgraph.  Hence, by the
$(1,1)$-valency, we see that $\Gamma(M)$ has at most two vertices of the
form $(\lambda_1,*)$.

In fact, for each $\lambda_1\in \ZZ$, we claim that $\Gamma(M)$ has either
zero or two vertices of the form $(\lambda_1,*)$.  The bigraded Hilbert
series of $M$ is given by the rational function
\[
H_M(s_1,s_2)=\frac{K_M(s_1,s_2)}{(1-s_1)(1-s_2)}:=\frac{\sum_{i=0}^2\sum_{\lambda
\in \ZZ^2}(-1)^i\beta^T_{i,\lambda}(M)\mathbf{s}^\lambda}{(1-s_1)(1-s_2)}.
\]
Since $M$ has finite length, $H_M(s_1,s_2)$ is actually a polynomial.  This
implies that the $K$-polynomial of $M$, $K_M$, is in $\<1-s_1\>
\cap\<1-s_2\>\subseteq\ZZ[s_1,s_2]$.
We thus have
\[
K_M(s_1,1)=\sum_{\lambda_1\in\ZZ} \left(\sum_{\lambda_2\in\ZZ}
\beta^T_{0,(\lambda_1,\lambda_2)}-
\beta^T_{1,(\lambda_1,\lambda_2)}
+\beta^T_{2,(\lambda_1,\lambda_2)}  \right)s_1^{\lambda_1}  =0.
\]
Thus, if $\Gamma(M)$ has a vertex of the form $(\lambda_1,*)$,
then it has at least two such vertices of this form.  Further, one
of these vertices must correspond to a generator of $\FF_1$ and the other must
correspond to a generator of either $\FF_0$ or $\FF_2$, and the corresponding
Betti numbers be equal.  By alternately considering Betti numbers with the same $x$-degrees and Betti numbers with the same $y$-degrees, we may show that any two Betti numbers in the same connected component of $\Gamma(M)$ must have the value.  The connectedness of $\Gamma(M)$ ths implies that each
nonzero Betti number of $M$ has the same positive value.

Suppose now that $\beta^T(M)=a'\beta^T(M')+a''\beta^T(M'')$ for some
bigraded modules $M',M''$ of finite length and some $a',a''\in \mathbb
Q_{>0}$.  We start by considering a bidegree $(\lambda_1,\lambda_2)$ where
$\beta^T_{0,(\lambda_1,\lambda_2)}(M')=r$.  Since $K_{M'}(s_1,1)=0$ and
$\Gamma(M)$ is $(1,1)$-valent, the argument above implies that there is a
unique $\mu_2$ such that $\beta^T_{1,(\lambda_1,\mu_2)}(M')\ne 0$, and
hence this Betti number must also equal $r$.  We then consider $y$-degrees,
and a similar argument shows that there is a unique $\mu_1$ such that
either (but not both) $\beta^T_{0,(\mu_1,\mu_2)}(M')\ne 0$ or
$\beta^T_{2,(\mu_1,\mu_2)}(M')\ne 0$. In either case, this Betti number
must also equal $r$.  

Continuing to alternate between $x$-degrees and $y$-degrees, we eventually
form a subcycle of $\Gamma(M)$.  However, since $\Gamma(M)$ is
$(1,1)$-valent and connected, this cycle must equal $\Gamma(M)$, so we have
shown that $\beta^T(M')$ is simply $r$ times $\beta^T(M)$.
\end{proof}

\begin{example}\label{ex:monomials}
Quotients of monomial ideals provide many examples of extremal bigraded
Betti tables.  For instance, let $M=I/J$, where $I=\<x^4,xy^2,x^2y,y^4\>$
and $J=\<x^6,x^3y^3,y^6\>$.  Then $\Gamma(M)$ is the graph on the left
(each vertex has weight $1$),
which is extremal by Claim~\ref{cond:3}.  
\[
\begin{tikzpicture}[scale=.65]
\filldraw (4,0) circle (2.5pt);
\filldraw (2,1) circle (2.5pt);
\filldraw (1,2) circle (2.5pt);
\filldraw (0,4) circle (2.5pt);
\filldraw (2,2) circle (2.5pt);
\filldraw (3,3) circle (2.5pt);
\filldraw (4,1) circle (2.5pt);
\filldraw (1,4) circle (2.5pt);
\filldraw (6,0) circle (2.5pt);
\filldraw (0,6) circle (2.5pt);
\filldraw (6,3) circle (2.5pt);
\filldraw (3,6) circle (2.5pt);
\filldraw (0,4) circle (2.5pt);
\draw[thin,dotted] (0,0)--(6,0);
\draw[thin,dotted] (0,1)--(6,1);
\draw[thin,dotted] (0,2)--(6,2);
\draw[thin,dotted] (0,3)--(6,3);
\draw[thin,dotted] (0,4)--(6,4);
\draw[thin,dotted] (0,5)--(6,5);
\draw[thin,dotted] (0,6)--(6,6);
\draw[thin,dotted] (0,0)--(0,6);
\draw[thin,dotted] (1,0)--(1,6);
\draw[thin,dotted] (2,0)--(2,6);
\draw[thin,dotted] (3,0)--(3,6);
\draw[thin,dotted] (4,0)--(4,6);
\draw[thin,dotted] (5,0)--(5,6);
\draw[thin,dotted] (6,0)--(6,6);
\draw[thin]
(4,0)--(4,1)--(2,1)--(2,2)--(1,2)--(1,4)--(0,4)--(0,6)--(3,6)--(3,3)--(6,3)--(6,0)--(4,0);
\end{tikzpicture}
\hspace{1cm}
\begin{tikzpicture}[scale=.25]
\filldraw (0,15) circle (3.5pt);
\filldraw (8,15) circle (3.5pt);
\filldraw (8,8) circle (3.5pt);
\filldraw (14,8) circle (3.5pt);
\filldraw (14,6) circle (3.5pt);
\filldraw (15,6) circle (3.5pt);
\filldraw (15,0) circle (3.5pt);
\filldraw (0,15) circle (3.5pt);
\filldraw (0,7) circle (3.5pt);
\filldraw (7,7) circle (3.5pt);
\filldraw (7,0) circle (3.5pt);
\filldraw (15,0) circle (3.5pt);
\draw[thin](15,0)--(7,0)--(7,7)
--(0,7)--(0,15)--(8,15)--(8,8)--(14,8)--(14,6)--(15,6)--(15,0);
\draw[thin,dotted] (0,0)--(15,0);
\draw[thin,dotted] (0,1)--(15,1);
\draw[thin,dotted] (0,2)--(15,2);
\draw[thin,dotted] (0,3)--(15,3);
\draw[thin,dotted] (0,4)--(15,4);
\draw[thin,dotted] (0,5)--(15,5);
\draw[thin,dotted] (0,6)--(15,6);
\draw[thin,dotted] (0,7)--(15,7);
\draw[thin,dotted] (0,8)--(15,8);
\draw[thin,dotted] (0,9)--(15,9);
\draw[thin,dotted] (0,10)--(15,10);
\draw[thin,dotted] (0,11)--(15,11);
\draw[thin,dotted] (0,12)--(15,12);
\draw[thin,dotted] (0,13)--(15,13);
\draw[thin,dotted] (0,14)--(15,14);
\draw[thin,dotted] (0,15)--(15,15);
\draw[thin,dotted] (0,16)--(15,16);
\draw[thin,dotted] (0,0)--(0,16);
\draw[thin,dotted] (1,0)--(1,16);
\draw[thin,dotted] (2,0)--(2,16);
\draw[thin,dotted] (3,0)--(3,16);
\draw[thin,dotted] (4,0)--(4,16);
\draw[thin,dotted] (5,0)--(5,16);
\draw[thin,dotted] (6,0)--(6,16);
\draw[thin,dotted] (7,0)--(7,16);
\draw[thin,dotted] (8,0)--(8,16);
\draw[thin,dotted] (9,0)--(9,16);
\draw[thin,dotted] (10,0)--(10,16);
\draw[thin,dotted] (11,0)--(11,16);
\draw[thin,dotted] (12,0)--(12,16);
\draw[thin,dotted] (13,0)--(13,16);
\draw[thin,dotted] (14,0)--(14,16);
\draw[thin,dotted] (15,0)--(15,16);
\end{tikzpicture}
\hspace{1cm}
\begin{tikzpicture}[scale=.25]
\filldraw (15,0) circle (3.5pt);
\filldraw (15,2) circle (3.5pt);
\filldraw (14,2) circle (3.5pt);
\filldraw (14,3) circle (3.5pt);
\filldraw (13,3) circle (3.5pt);
\filldraw (13,5) circle (3.5pt);
\filldraw (11,5) circle (3.5pt);
\filldraw (11,7) circle (3.5pt);
\filldraw (9,7) circle (3.5pt);
\filldraw (9,12) circle (3.5pt);
\filldraw (5,12) circle (3.5pt);
\filldraw (5,16) circle (3.5pt);
\filldraw (0,16) circle (3.5pt);
\filldraw (15,0) circle (3.5pt);
\filldraw (12,0) circle (3.5pt);
\filldraw (12,1) circle (3.5pt);
\filldraw (10,1) circle (3.5pt);
\filldraw (10,6) circle (3.5pt);
\filldraw (8,6) circle (3.5pt);
\filldraw (8,9) circle (3.5pt);
\filldraw (6,9) circle (3.5pt);
\filldraw (6,11) circle (3.5pt);
\filldraw (4,11) circle (3.5pt);
\filldraw (4,14) circle (3.5pt);
\filldraw (0,14) circle (3.5pt);
\draw[thin](15,0)--(15,2)--(14,2)
--(14,3)--(13,3)--(13,5)--(11,5)--(11,7)--(9,7)--(9,12)--(5,12)--(5,16)--(0,16);
\draw[thin](15,0)--(12,0)--(12,1)--(10,1)--(10,6)--(8,6)
--(8,9)--(6,9)--(6,11)--(4,11)--(4,14)--(0,14)--(0,16);
\draw[thin,dotted] (0,0)--(15,0);
\draw[thin,dotted] (0,1)--(15,1);
\draw[thin,dotted] (0,2)--(15,2);
\draw[thin,dotted] (0,3)--(15,3);
\draw[thin,dotted] (0,4)--(15,4);
\draw[thin,dotted] (0,5)--(15,5);
\draw[thin,dotted] (0,6)--(15,6);
\draw[thin,dotted] (0,7)--(15,7);
\draw[thin,dotted] (0,8)--(15,8);
\draw[thin,dotted] (0,9)--(15,9);
\draw[thin,dotted] (0,10)--(15,10);
\draw[thin,dotted] (0,11)--(15,11);
\draw[thin,dotted] (0,12)--(15,12);
\draw[thin,dotted] (0,13)--(15,13);
\draw[thin,dotted] (0,14)--(15,14);
\draw[thin,dotted] (0,15)--(15,15);
\draw[thin,dotted] (0,16)--(15,16);
\draw[thin,dotted] (0,0)--(0,16);
\draw[thin,dotted] (1,0)--(1,16);
\draw[thin,dotted] (2,0)--(2,16);
\draw[thin,dotted] (3,0)--(3,16);
\draw[thin,dotted] (4,0)--(4,16);
\draw[thin,dotted] (5,0)--(5,16);
\draw[thin,dotted] (6,0)--(6,16);
\draw[thin,dotted] (7,0)--(7,16);
\draw[thin,dotted] (8,0)--(8,16);
\draw[thin,dotted] (9,0)--(9,16);
\draw[thin,dotted] (10,0)--(10,16);
\draw[thin,dotted] (11,0)--(11,16);
\draw[thin,dotted] (12,0)--(12,16);
\draw[thin,dotted] (13,0)--(13,16);
\draw[thin,dotted] (14,0)--(14,16);
\draw[thin,dotted] (15,0)--(15,16);
\end{tikzpicture}
\]
The middle and right graphs above correspond to the matching graphs of
other quotients of monomial ideals: the middle graph is the matching graph of $\langle x^7,y^7\rangle / \langle x^{15}, x^{14}y^6,x^8y^8,y^{15} \rangle$.
These types of examples
may be very far from the pure resolutions we saw in \S\ref{sec:graded}.  For
instance, we can produce an extremal bigraded Betti table given by a
resolution $\FF$, where $\FF_i$ has minimal generators in arbitrarily many
different bidegrees.

Note that these examples are not pure with respect to the $\ZZ$-grading.
\end{example}
Claim~\ref{cond:3} begs the question of which $(1,1)$-valent, connected
graphs can be realized as $\Gamma(M)$ for some $M$. If $\Gamma(M)$ is a
$(1,1)$-valent, connected graph that comes from a quotient of monomial
ideals, then it decomposes as the union of two nonintersecting monotonic
paths (from the upper left corner to the lower right corner).  But the
following example illustrates that not all extremal Betti tables arise in
this way.

\begin{example}\label{ex:heartshape}
The cokernel of the matrix below induces the following matching graph:
\[
\begin{tabular}{ccc}
\parbox[h]{4cm}{
\bordermatrix{&\binom{3}{0}&\binom{2}{1}&\binom{1}{2}&\binom{0}{3}\cr
                \binom{1}{0}&x^2 &  xy & y^2 & 0\cr
                \binom{0}{1}& 0 & x^2  & xy& y^2
                }
                \vspace*{.5cm}
}
&
\hspace*{1.5cm}
&
\parbox[h]{3cm}{
\begin{tikzpicture}[scale=.75]
\filldraw (1,0) circle (2.5pt);
\filldraw (0,1) circle (2.5pt);
\filldraw (3,0) circle (2.5pt);
\filldraw (2,1) circle (2.5pt);
\filldraw (1,2) circle (2.5pt);
\filldraw (0,3) circle (2.5pt);
\filldraw (2,2) circle (2.5pt);
\filldraw (3,3) circle (2.5pt);
\draw[thin,dotted] (0,0)--(3,0);
\draw[thin,dotted] (0,1)--(3,1);
\draw[thin,dotted] (0,2)--(3,2);
\draw[thin,dotted] (0,3)--(3,3);
\draw[thin,dotted] (0,0)--(0,3);
\draw[thin,dotted] (1,0)--(1,3);
\draw[thin,dotted] (2,0)--(2,3);
\draw[thin,dotted] (3,0)--(3,3);
\draw[thick] (1,0)--(3,0)--(3,3)--(0,3)--(0,1)--(2,1)--(2,2)--(1,2)--(1,0);
\end{tikzpicture}
}
\end{tabular}
\]
\end{example}

However, not every $(1,1)$-valent connected graph arises as the
matching graph of a module.

\begin{example}\label{ex:pacman}
Suppose that the graph on the left below is the matching
graph of a module $M$ of finite length. Then the free resolution of $M$
has the form shown on the right. 

\begin{tabular}{cc}
\parbox[h]{2cm}{
\[
\hspace*{-12.5cm}
\begin{tikzpicture}[scale=.75]
\filldraw (0,0) circle (2.5pt);
\filldraw (1,1) circle (2.5pt);
\filldraw (3,0) circle (2.5pt);
\filldraw (2,1) circle (2.5pt);
\filldraw (1,3) circle (2.5pt);
\filldraw (0,2) circle (2.5pt);
\filldraw (2,2) circle (2.5pt);
\filldraw (3,3) circle (2.5pt);
\draw[thin,dotted] (0,0)--(3,0);
\draw[thin,dotted] (0,1)--(3,1);
\draw[thin,dotted] (0,2)--(3,2);
\draw[thin,dotted] (0,3)--(3,3);
\draw[thin,dotted] (0,0)--(0,3);
\draw[thin,dotted] (1,0)--(1,3);
\draw[thin,dotted] (2,0)--(2,3);
\draw[thin,dotted] (3,0)--(3,3);
\draw[thick] (0,0)--(3,0)--(3,3)--(1,3)--(1,1)--(2,1)--(2,2)--(0,2)--(0,0);
\end{tikzpicture}
\]
}
&
\parbox[h]{7cm}{
\[
\hspace*{-2.75cm}
\begin{matrix}
T\\ \oplus \\ T(-1,-1)
\end{matrix}
\overset{\phi}{\xleftarrow{\hspace{1.15cm}}}
\begin{matrix}
T(-3,0)\\ \oplus \\ T(-2,-1)\\ \oplus \\T(-1,-3)\\ \oplus\\ T(0,-2)
\end{matrix}
\overset{\psi}{\xleftarrow{\hspace{1.15cm}}}
\begin{matrix}
T(-2,2)\\ \oplus \\ T(-3,-3)
\end{matrix}
\longleftarrow 0
\]
}
\end{tabular}

\noindent In this case, the matrix $\phi$ would have the form:
\[
\phi=\bordermatrix{&\binom{3}{0}&\binom{2}{1}&\binom{1}{3}&\binom{0}{2}\cr
                \binom{0}{0}&x^3 &  a_1x^2y & a_2xy^3 & y^2\cr
                \binom{1}{1}& 0 & -x  & y^2& 0
                }
\]
for some scalars $a_1, a_2$.  After performing an appropriate row operation
and column operation, we can assume that $a_1=a_2=0$.   However, the kernel
of the resulting matrix is generated by $T(-3,-2)\oplus T(-2,-3)$,
providing the contradiction.
\end{example}

Though we know of no condition for determining which $(1,1)$-valent,
connected graphs arise as $\Gamma(M)$ for some $M$,
Claim~\ref{cond:3} provides a zoo of extremal rays. If we
restrict to Betti tables whose support is contained in
the square with corners $(0,0)$ and $(3,3)$, Claim~\ref{cond:3} produces 74
extremal rays. They are generated by the tables of quotients of monomial
ideals, along with the table in Example~\ref{ex:heartshape} and its dual. 
We conclude with a conjecture.

\begin{conj}
All extremal Betti tables of the cone of bigraded $T$-modules with finite
length are generated by Betti tables of modules $M$ that
satisfy Claim~\ref{cond:3}.
\end{conj}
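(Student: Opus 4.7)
The plan is to attack the conjecture by contrapositive: I would show that if a finite-length bigraded $T$-module $M$ has a matching graph $\Gamma(M)$ that fails to be connected or has a vertex of valency $(a,b)$ with $\max(a,b)\geq 2$, then $\beta^T(M)$ admits a nontrivial decomposition $\beta^T(M)=\beta^T(M')+\beta^T(M'')$ with $\beta^T(M')$ not a scalar multiple of $\beta^T(M)$. Combined with Claim~\ref{cond:3}, this would force the extremal rays of $B_\QQ^\text{fin}(T)$ to be spanned exactly by Betti tables of modules satisfying Claim~\ref{cond:3}.

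I would handle the disconnected case first. Suppose $\Gamma(M)=\Gamma_1\sqcup\Gamma_2$ with vertex sets $V_1,V_2$. Because any two vertices sharing an $x$- or $y$-coordinate are joined by an edge, the $x$-coordinates of $V_1$ and $V_2$ are disjoint, and similarly for $y$-coordinates. Defining $\beta^{(j)}$ to be the restriction of $\beta^T(M)$ to $V_j$, the $K$-polynomial computation from the proof of Claim~\ref{cond:3} yields $K_{\beta^{(j)}}(s_1,1)=K_{\beta^{(j)}}(1,s_2)=0$, so each $\beta^{(j)}$ is at least a combinatorially plausible Betti table; the remaining step is to realize $\beta^{(j)}$ as an actual bigraded Betti table. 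For the high-valency case, suppose some vertex has $x$-valency $a\geq 2$. In each $x$-row the vanishing of $K_M(s_1,1)$ forces the positive-sign vertices (from $\FF_0$ and $\FF_2$) and negative-sign vertices (from $\FF_1$) to balance, so the $x$-edges at that row admit multiple perfect matchings between the two sign classes. Choosing one $x$-matching in every row and one $y$-matching in every column extracts a $(1,1)$-valent subgraph $\Gamma_0\subseteq\Gamma(M)$; isolating a connected component of $\Gamma_0$ gives a candidate Betti table for a Claim~\ref{cond:3}-module $N$. One would then try to show $\beta^T(M)-c\cdot\beta^T(N)\in B_\QQ^\text{fin}(T)$ for suitable $c>0$ and iterate.

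The principal obstacle is realizability. As Example~\ref{ex:pacman} shows, not every $(1,1)$-valent connected graph arises as $\Gamma(N)$ for some module $N$, so an arbitrarily chosen matching may yield a candidate table that cannot actually be realized. I would circumvent this by restricting the decomposition to subgraphs known to be realizable, for instance those that decompose as two nonintersecting monotone paths and hence come from quotients of monomial ideals as in Example~\ref{ex:monomials}, or heart-shaped graphs of the type in Example~\ref{ex:heartshape}; then one must prove that such realizable matchings always exist in sufficient abundance to span the cone. The hardest step is certifying that the residual $\beta^T(M)-c\cdot\beta^T(N)$ is genuinely the Betti table of an honest finite-length bigraded module, which would require combining the combinatorial choice of matching with an explicit modification of a bigraded presentation matrix of $M$ that splits off a submodule whose resolution realizes $N$. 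Balancing these combinatorial and algebraic constraints simultaneously, especially in the presence of mixed-monomial (neither purely $x$ nor purely $y$) entries in the differentials, is where I expect the main difficulty to lie.
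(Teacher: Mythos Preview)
The statement you are attempting to prove is a \emph{conjecture}, explicitly labeled and left open at the end of the paper; there is no proof in the paper to compare your proposal against.

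Your outline is a reasonable plan of attack, and you have correctly identified the contrapositive formulation (which is legitimate here, since $\Gamma(M)$ depends only on $\beta^T(M)$, so the conjecture together with Claim~\ref{cond:3} is equivalent to ``$\beta^T(M)$ is extremal if and only if $\Gamma(M)$ is $(1,1)$-valent and connected''). However, what you have written is a sketch with acknowledged gaps, not a proof, and those gaps are precisely the reasons the statement remains conjectural. In the disconnected case, divisibility of the restricted $K$-polynomial by $(1-s_1)(1-s_2)$ is necessary but far from sufficient for $\beta^{(j)}$ to be the Betti table of an actual finite-length module; you flag this but offer no mechanism to produce the modules. In the high-valency case, the realizability obstruction you cite from Example~\ref{ex:pacman} is exactly what blocks the argument: the paper says outright that ``we know of no condition for determining which $(1,1)$-valent, connected graphs arise as $\Gamma(M)$ for some $M$,'' so your plan to restrict to monomial-quotient or heart-shaped subgraphs would require proving that such realizable subgraphs always exist inside $\Gamma(M)$, and no such result is available. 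Finally, certifying that the residual $\beta^T(M)-c\,\beta^T(N)$ lies in $B_\QQ^{\text{fin}}(T)$ by ``modifying a presentation matrix to split off a submodule'' is a hope, not an argument; Example~\ref{ex:pacman} already shows that the algebra need not cooperate with the combinatorics.

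In short: your proposal correctly locates where the difficulties lie, but it does not resolve any of them, and the paper does not either.
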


\begin{bibdiv}
\begin{biblist}

\bib{avramov-extremal}{article}{
   author={Avramov, Luchezar L.},
   title={Modules with extremal resolutions},
   journal={Math. Res. Lett.},
   volume={3},
   date={1996},
   number={3},
   pages={319--328},
}

\bib{avramov-infinite}{article}{
    AUTHOR = {Avramov, Luchezar L.},
     TITLE = {Infinite free resolutions},
 BOOKTITLE = {Six lectures on commutative algebra ({B}ellaterra, 1996)},
    SERIES = {Progr. Math.},
    VOLUME = {166},
     PAGES = {1--118},
 PUBLISHER = {Birkh\"auser},
   ADDRESS = {Basel},
      YEAR = {1998},
}

\bib{beks-tensor}{article}{
      author={Berkesch, Christine},
      author={Erman, Dan},
      author={Kummini, Manoj},
      author={Sam, Steven~V},
      title={Tensor complexes: Multilinear free resolutions
        constructed from higher tensors},
      note={\tt arXiv:1101.4604},
      date={2011},
      journal={J. Eur. Math. Soc. (to appear)},
}

\bib{beks-local}{article}{
      author={Berkesch, Christine},
      author={Erman, Daniel},
      author={Kummini, Manoj},
      author={Sam, Steven~V},
     TITLE = {Shapes of free resolutions over a local ring},
      YEAR = {2011},
      note = {arXiv:1105.2204},
      journal = {Math. Ann. (to appear)},
}

\bib{bigatti}{article}{
   author={Bigatti, Anna Maria},
   title={Upper bounds for the Betti numbers of a given Hilbert function},
   journal={Comm. Algebra},
   volume={21},
   date={1993},
   number={7},
   pages={2317--2334},
}

\bib{boij-floystad}{article}{
   author={Boij, Mats},
   author={Fl{\o}ystad, Gunnar},
   title={The cone of Betti diagrams of bigraded Artinian modules of
   codimension two},
   conference={
      title={Combinatorial aspects of commutative algebra and algebraic
      geometry},
   },
   book={
      series={Abel Symp.},
      volume={6},
      publisher={Springer},
      place={Berlin},
   },
   date={2011},
   pages={1--16},
}

\bib{boij-sod1}{article}{
    AUTHOR = {Boij, Mats},
    AUTHOR = {S{\"o}derberg, Jonas},
    TITLE = {Graded {B}etti numbers of {C}ohen--{M}acaulay modules and the
              multiplicity conjecture},
    JOURNAL = {J. Lond. Math. Soc. (2)},
    VOLUME = {78},
    YEAR = {2008},
    NUMBER = {1},
    PAGES = {85--106},
}

\bib{boij-sod2}{misc}{
      author={Boij, Mats},
      author={S{\"o}derberg, Jonas},
       title={Betti numbers of graded modules and the multiplicity
       conjecture in the {C}ohen--{M}acaulay case},
        date={2008},
        note={\tt arXiv:0803.1645v1},
}

\bib{buchs-eis}{article}{
  author={Buchsbaum, David A.},
  author={Eisenbud, David},
  title={Remarks on ideals and resolutions},
  conference={
     title={Symposia Mathematica, Vol. XI (Convegno di Algebra Commutativa,
     INDAM, Rome, 1971)},
  },
  book={
     publisher={Academic Press},
     place={London},
  },
  date={1973},
  pages={193--204},
}

\bib{eis-erm}{article}{
  author={Eisenbud, David},
  author={Erman, Daniel},
  title={Categorified duality in Boij--S\"oderberg Theory and invariants of
  free complexes},
  date={2012},
}

\bib{efw}{article}{
  author={Eisenbud, David},
  author={Fl\o ystad, Gunnar},
  author={Weyman, Jerzy},
  title={The existence of pure free resolutions},
  journal={Ann. Inst. Fourier (Grenoble) (to appear)},
  date={2007},
  note={\tt arXiv:0709.1529}
}

\bib{eis-schrey1}{article}{
      author={Eisenbud, David},
      author={Schreyer, Frank-Olaf},
       title={Betti numbers of graded modules and cohomology of vector
  bundles},
        date={2009},
     journal={J. Amer. Math. Soc.},
      volume={22},
      number={3},
       pages={859\ndash 888},
}

\bib{eis-schrey-icm}{inproceedings}{
      author={Eisenbud, David},
      author={Schreyer, Frank-Olaf},
       title={Betti numbers of syzygies and cohomology of coherent
       sheaves},
        date={2010},
   booktitle={Proceedings of the {I}nternational {C}ongress of
  {M}athematicians},
        note={Hyderabad, India},
}


\bib{floystad-multigraded}{article}{
   author={Fl{\o}ystad, Gunnar},
   title={The linear space of Betti diagrams of multigraded Artinian
   modules},
   journal={Math. Res. Lett.},
   volume={17},
   date={2010},
   number={5},
   pages={943--958},
   issn={1073-2780},
   review={\MR{2727620 (2012c:13032)}},
}

\bib{floy-expository}{article}{
	AUTHOR = {Fl{\o}ystad, Gunnar},
     TITLE = {Boij-S\"oderberg theory: Introduction and survey},
     NOTE = {\tt arXiv:1106.0381},
YEAR = {2011},
     }
     
\bib{hartshorne}{book}{
   author={Hartshorne, Robin},
   title={Algebraic geometry},
   note={Graduate Texts in Mathematics, No. 52},
   publisher={Springer-Verlag},
   place={New York},
   date={1977},
   pages={xvi+496},
}

\bib{herzog-kuhl}{article}{
      author={Herzog, J.},
      author={K{\"u}hl, M.},
       title={On the {B}etti numbers of finite pure and linear
       resolutions},
        date={1984},
     journal={Comm. Algebra},
      volume={12},
      number={13-14},
       pages={1627\ndash 1646},
}

\bib{hulett}{article}{
   author={Hulett, Heather A.},
   title={Maximum Betti numbers of homogeneous ideals with a given Hilbert
   function},
   journal={Comm. Algebra},
   volume={21},
   date={1993},
   number={7},
   pages={2335--2350},
}

\bib{iyengar-pardue}{article}{
   author={Iyengar, Srikanth},
   author={Pardue, Keith},
   title={Maximal minimal resolutions},
   journal={J. Reine Angew. Math.},
   volume={512},
   date={1999},
   pages={27--48},
}

\bib{lascoux}{article}{
   author={Lascoux, Alain},
   title={Syzygies des vari\'et\'es d\'eterminantales},
   language={French},
   journal={Adv. in Math.},
   volume={30},
   date={1978},
   number={3},
   pages={202--237},
}

\bib{M2}{misc}{
    label={M2},
    author={Grayson, Daniel~R.},
    author={Stillman, Michael~E.},
    title = {Macaulay 2, a software system for research
	    in algebraic geometry},
    note = {Available at \url{http://www.math.uiuc.edu/Macaulay2/}},
}

\bib{pardue}{article}{
   author={Pardue, Keith},
   title={Deformation classes of graded modules and maximal Betti numbers},
   journal={Illinois J. Math.},
   volume={40},
   date={1996},
   number={4},
   pages={564--585},
}

\bib{peeva-book}{book}{
   author={Peeva, Irena},
   title={Graded syzygies},
   series={Algebra and Applications},
   volume={14},
   publisher={Springer-Verlag London Ltd.},
   place={London},
   date={2011},
   pages={xii+302},
}

\end{biblist}
\end{bibdiv}
\end{document}